\newcommand{\Asig}{\mathcal{A}_{\mathrm{lin}}}
\newcommand{\Alog}{\mathcal{A}_{\mathrm{DNN}}}
\newcommand{\Tsig}{\mathcal{T}_{\mathrm{lin}}}
\newcommand{\Tlog}{\mathcal{T}_{\mathrm{DNN}}}
\newcommand{\R}{\mathbb{R}}
\newcommand{\N}{\mathbb{N}}
\newcommand{\Z}{\mathbb{Z}}
\newcommand{\E}{\mathbb{E}}
\newcommand{\B}{\mathbb{B}}
\newcommand{\X}{\mathbb{X}}
\newcommand{\Y}{\mathbb{Y}}
\newcommand{\bx}{\mathbf{x}}
\newcommand{\by}{\mathbf{y}}
\newcommand{\ba}{\mathbf{a}}
\newcommand{\bb}{\mathbf{b}}
\newcommand{\bg}{\mathbf{g}}
\renewcommand{\P}{\mathbb{P}}
\newcommand{\F}{\mathcal{F}}
\renewcommand{\d}{\mathrm{d}}
\newcommand{\Lip}{\mathrm{Lip}}
\newcommand{\rps}{\Omega^p_T}
\newcommand{\rpsz}{\Omega^{p,0}_T}
\newcommand{\hatrpsz}{\hat{\Omega}^{p,0}_T}
\newcommand{\rpst}{\Omega^p_t}
\newcommand{\rpszt}{\Omega^{p,0}_t}
\DeclareMathOperator{\Sigop}{Sig}
\newcommand{\Sig}[1]{\Sigop(#1)}
\newcommand{\integer}[1]{\lfloor #1 \rfloor}
\newcounter{cprop}[section]
\newtheorem{theorem}[cprop]{Theorem}
\newtheorem*{theorem*}{Theorem}
\theoremstyle{plain}
\newtheorem{corollary}[cprop]{Corollary}
\newtheorem*{corollary*}{Corollary}
\newtheorem{lemma}[cprop]{Lemma}
\newtheorem{proposition}[cprop]{Proposition}
\numberwithin{equation}{section}
\theoremstyle{definition}
\theoremstyle{remark}
\newtheorem{remark}[cprop]{Remark}
\subjclass[2020]{93E20, 60L10, 93E35, 60L90, 60L205} % MSC codes
\keywords{Path signatures, classical stochastic control, numerical methods, deep learning, rough paths, fractional Brownian motion}
\title[Stochastic Control with Signatures]{Stochastic Control with Signatures}
\begin{document}

% Optional PDF information
\ifpdf
\hypersetup{
  pdftitle={Stochastic Control with Signatures},
  pdfauthor={P. Bank, C. Bayer, P. P. Hager, S. Riedel, and T. Nauen}
}
\fi

\author{P. Bank}
\address{Peter Bank \\
Institut f\"ur Mathematik, Technische Universit\"at Berlin, Germany}
\email{bank@math.tu-berlin.de}

\author{C. Bayer}
\address{Christian Bayer \\ Weierstrass Institute, Berlin, Germany}
\email{christian.bayer@wias-berlin.de}

\author{P.P. Hager}
\address{Paul P. Hager \\
Department of Statistics and Operations Research, University of Vienna, Austria }
\email{paul.peter.hager@univie.ac.at}

\author{S. Riedel}
\address{Sebastian Riedel \\
Fakult\"at für Mathematik und Informatik, FernUniversit\"at Hagen, Germany}
\email{sebastian.riedel@fernuni-hagen.de}

\author{T. Nauen}
\address{Tobias Nauen \\
Deutsches Forschungszentrum f\"ur K\"unstliche Intelligenz, Kaiserslautern,\break  Germany}
\email{tobias\_christian.nauen@dfki.de}

\begin{abstract}
    This paper proposes to parameterize open loop controls in stochastic optimal control problems via suitable classes of functionals depending on the driver's path signature, a concept adopted from rough path integration theory. We rigorously prove that these controls are dense in the class of progressively measurable controls and use rough path methods to establish suitable conditions for stability of the controlled dynamics and target functional. These results pave the way for Monte Carlo methods to stochastic optimal control for generic target functionals and dynamics. We discuss the rather versatile numerical algorithms for computing approximately optimal controls and verify their accurateness in benchmark problems from Mathematical Finance.
\end{abstract}

\maketitle

\section{Introduction}
Stochastic optimal control approaches typically start by considering the dynamics of a controlled system driven by some source of randomness, originating for instance from a Brownian motion. Controls are then specified in open loop form as a function(al) adapted to this driving randomness or constructed in closed form as feedback functions dynamically applied to the current state of the system. The evolution of the controlled system and the control jointly result in some cost or reward whose expectation is ultimately sought to be optimized.

A key mathematical tool is the dynamic programming principle which in its infinitesimal form as a Hamilton-Jacobi-Bellman (HJB) equation for the value function often takes the form of a nonlinear (integro-)partial differential equation on state space; see \cite{fleming2006stochastic} for a standard reference of this approach. Typically, these equations can be solved only numerically, and even then it remains a daunting task, particularly due to the well known ``curse of dimensionality'':  If one tries to approximate the value function by computing its value in certain points, one quickly needs an impossibly large number of these. Alternative methods where approximations of the value function are parameterized through, e.g., a deep neural network (DNN) have been successfully applied in recent years also in higher dimensions (cf. the survey \cite{germain2023neural} and \cite{jentzen2017deep, sirignano2018dgm, hure2020deep} to name just a few). But even these approaches reach their limits in situations where non-Markovian noise as generated, for instance, by a fractional Brownian motion requires one to store the whole history of the controlled system (or of at least parts of it). Even the derivation of the HJB equation can become challenging as one works on an infinite-dimensional state space; see \cite{fabbri2017stochastic} for a recent monograph on optimal control in such functional analytic settings. In a finite-dimensional Markovian setting, parametrizations of closed loop (feedback) controls have been proposed in \cite{gobet2005sensitivity} and successfully extended using DNNs in \cite{han2016deep} (see also \cite{ruf2020neural, han2021recurrent} and the hybrid method \cite{hure2021deep}).
Again the transition to the non-Markovian setting driven, e.g., by a fractional Brownian motion is far from evident as the entire history of the system (and maybe even more) has to be incorporated in the control. Alternatively, bespoke finite-dimensional approximations to the controlled system can be used. But these need to be carefully tailored to the problem under investigation; see, e.g., \cite{baeuerle2020portfolio} and \cite{bayer_breneis23,bayer2019implied}.

By contrast the approach proposed in the present paper uses what is called path signatures of the driving noise to consider suitable parameterizations of open loop policies. (A  closed loop approach has also recently been proposed in \cite{hoglund2023a} on which we further comment below.)
We will prove that for many problems this signature based class of policies is sufficiently rich to include some which are arbitrarily close to optimal. Studied first by Chen \cite{chen1954iterated}, signatures were identified by Lyons \cite{lyons1998differential} as the building block for his by now well developed theory of rough path integration. At any time the signature consists of all the iterated integrals of the components in the path under consideration against each other until the present moment. This infinite-dimensional tensor can be shown to (essentially) encode the full evolution of the path (\cite{hambly2010uniqueness}, \cite{boedihardjo2016signature}). Therefore, truncated versions of the signature can be expected to be an efficient finite-dimensional encoding of path information that can be used by numerical approaches. Even so, the space of progressively measurable policies still corresponds to the typically vast space of measurable, non-anticipative mappings on the path space of the driving noise. It is thus remarkable that even \emph{linear} functionals of the signature can be shown to be dense in the space of continuous functionals over compact path spaces. This is used by \cite{kalsi2020optimal} for a Stone-Weierstrass argument which is also at the heart of our proof of denseness with respect to convergence in measure of linear or DNN-parameterized families of signature based policies in the class of all admissible ones (cf.~Prop.~\ref{prop:dens_sig_controls}). 

For the workhorse model of stochastic optimal control where ones seeks to minimize expected costs like
\begin{align*}
  \E[L(Y^U,U)]=\E\left[\int_0^T f(t,Y^{U}_t,U_t)\d t + g(Y^{U}_T)\right]
\end{align*}
subject to system dynamics such as
\begin{align}\label{eq:main_sde}
Y^{U}_0=y_0, \quad \d Y^{U}_t = b(Y^{U}_t, U_t)\d t + \sigma(Y^{U}_t)\d X_t,
\end{align}
a stability result (Theorem~\ref{thm:diehl_et_al}) shows that this kind of denseness is sufficient for our purposes (given some integrability assumptions).

Remarkably, this remains true when, instead of the usual choice of $X$ as a Brownian motion, one passes to a stochastic rough driver $\X$ such as a fractional Brownian motion $W^H$. As a consequence, even for such non-Markovian, inherently infinite-dimensional drivers the same signature-based policies can be used for computing approximately optimal ones. Indeed, machine learning tools such as stochastic gradient descent become applicable as soon as one can efficiently produce Monte Carlo samples of $L(Y^U,U)$ for a given signature policy $U$. This optimization procedure is therefore highly versatile and largely model-independent: only its dynamics $Y^U$ need to be generated for an open loop control determined from samples of the extended driver's $(t,\X_t)_{t \geq 0}$ (truncated) signature. The latter can for instance be generated even offline for any driver of interest and then re-used for different dynamics driven by the same~$\X$.

We illustrate the effectiveness of this optimization procedure by working out two case studies on linear-quadratic stochastic optimal control problems. In either case a fractional Brownian motion makes these problems challenging to address by dynamic programming methods. In the first one, an analytic formula for the problem's value is nonetheless available and our numerically computed signature policies turn out to approximate it very well. In the second case study the benchmark is provided by a highly original signature-based numerical method due to \cite{kalsi2020optimal} which even manages to transform the optimization problem into a deterministic one that just draws on the \emph{expected} signature. Here again our more versatile approach performs well. In either case, the signature turns out be needed only up to level 3 or 4, which corresponds to a 5 respectively 8-dimensional encoding (at least when using log-signatures) of the extended fractional Brownian motion's full path $(s,W^H_s)_{s \leq t}$.

\subsection*{Related literature}
Initiated by \cite{diehl2017stochastic}, there is ongoing development in the pathwise control of rough differential equations (\cite{allan2020pathwise, chakraborty2024pathwise}) and the associated \emph{rough} HJB equations. Although these works are only tangentially related to our topic, they require similar stability results for the rough dynamic system with respect to the control. To impose minimal assumptions on the control and the regularity of the rough signal, we present a novel stability result.

The theoretical analysis of continuous-time non-Markovian control problems already constitutes a vast body of literature, a large part of which deals with dynamic stochastic control problems where the driver is Markov but the coefficients are path dependent (e.g.\ delayed systems \cite{kolmanovskii1996control}).
We therefore restrict the discussion to works that have an ambition in implementable numerical methods for problems including a non-Markovian driving signal.

The work \cite{leao2024solving} proposes a discretization method that applies to the optimal drift control of a (path-dependent) differential equation driven by fractional Brownian motion with constant volatility.
Exploiting the underlying Brownian filtration they parameterize open loop controls as a functional of a martingale discretization of Brownian motion, which are then trained based on a dynamic programming principle for the (augmented) discretized structure.
Notably the authors are able to prove convergence rates towards the optimal value.
In general, as we comment in more detail below, such discretiziation methods suffer from a proportional increase in the state space dimension.

The recent paper \cite{hoglund2023a} presents a novel method based on neural rough differential equations, originally introduced in \cite{kidger2020neural, morrill2021neural}, which serve as a continuous-time analogue to recurrent neural networks. 
In this sense, the method can be seen as the continuous-time extension of \cite{han2021recurrent}.
Similar to our work, \cite{hoglund2023a} employs a Monte Carlo-based method by directly parametrizing the control, though in their case, the parameterization is done for feedback (closed loop) controls. Signatures appear in \cite{hoglund2023a} as a special case when using linear activation functions, a perspective the authors use to argue for the density of their approximations. In this case, the signature arises from the system's state rather than its driver.
The stochastic optimal control theory for such path-dependent feedback controls is not as well understood as the open loop case though. The main challenge is that in these feedback formulations the information flow on which controls are based is itself depending on the control to be optimized. This leads to a ``chicken-and-egg problem'' whose mathematical treatment is far from complete; see, e.g., the early account  \cite{bensoussan92} as well as the more recent work \cite{cohen2023optimaladaptivecontrolseparable} and the references therein. In fact, to the best of our knowledge, a formal density argument for parametrized feedback policies in these highly nonlinear control spaces is an open question. This is in contrast to our approach with open loop controls which avoids these intricacies and which allows us to prove in a mathematically rigorous manner that the corresponding optimal control value can be approximated by policies based on signatures of the driving noise. 

The paper \cite{kalsi2020optimal} (and later also \cite{cartea2020optimal}) considers strategies given as linear functionals of the dynamically updated signature of the system's driver. It focuses on optimization problems where the target functional  can be expressed as a linear functional given in terms of a strategy dependent shuffle polynomial acting on the expected signature. The shuffle product used there ensures that the space of linear signature functionals forms an algebra, making the class of such target functionals larger than one might be led to believe from the outset, even though this problems class is far from the generic formulation of a stochastic optimal control problem as considered in the present paper.

In \cite{cuchiero2024signature} signature models are used for stochastic portfolio theory, hence, portfolio optimization, see also \cite{futter2023signature} for a very similar approach. The recent works~\cite{cuchiero2023joint,cuchiero2023signaturebased} analyze asset price models (without optimization) given as SDEs with coefficients defined as linear functionals of signatures of underlying stochastic processes. They calibrate such models (jointly) to VIX and SPX options. %

Signatures can also be used for solving path-dependent backwards stochastic differential equations which can be related to stochastic optimal control problems by Pontryagin principle (see e.g.~\cite{BEFQ24}).

As an alternative to signature methods, numerical approximation of stochastic optimal control problems in non-Markovian settings can be used.
We will specifically comment on two such approaches recently explored for specific examples.

The first such approach is to discretize the problem in time and to then enhance the state variable by the whole history, see, for instance, \cite{becker2019deep} for a specific example.
Hence, if the original state processes evolved in $\R^d$ and we are using a time-grid of length $N$, then the enhanced state process evolves in $\R^{dN}$.
Obviously, this potentially magnifies the curse of dimensionality, unless we can rely on powerful techniques for dealing with high-dimensional problems (such as deep learning as used in \cite{becker2019deep} or tensor-trains as used in \cite{bayer2023pricing}).
In principle, the same issue arises for signature methods (i.e., the dimension of the truncated tensor algebra is much higher than the dimension of the original state space), but signatures are empirically seen to be highly efficient encoding of path properties, implying that the dimension of the state space required to effectively approximate the solution is often much lower.
Crucially, the dimension of the truncated signature does \emph{not} depend on the time-discretization.
This is especially relevant when the underlying process is rough (e.g., fractional Brownian motion with low Hurst index), and therefore requires fine discretization of the stochastic optimal control problem.

A second workable approximation strategy can also be based on a \emph{Markovian approximation} of the non-Markovian state process, see, for instance, \cite{baeuerle2020portfolio} for an application to portfolio optimization in rough volatility models, as well as \cite{bayer_breneis23,bayer2019implied} for American options. 
In essence, the idea is to consider the same control problem, but for a surrogate state process, which is a Markov process.
For instance, when the state process is given in terms of a stochastic Volterra equation, there is a general approach \cite{baeuerle2020portfolio,bayer_breneis23} for constructing multi-factor Markovian approximations essentially of Ornstein-Uhlenbeck type.
This approximation will also highly enlarge the dimension of the state space, but, under suitable conditions, the approximation method can be extremely efficient, see \cite{bayer2023weak} for a concrete example.
In any case, these constructions are highly problem specific, unlike the general, model-free architecture proposed in this paper.
Even if a good approximation to the value function -- now expressed in terms of the multi-factor surrogate process -- is available, actually applying the corresponding strategy in the context of the original problem might be difficult, as it would require ``inverting'' the Markovian approximation in a path-wise manner, which is generally not a well-posed problem. 

\subsubsection*{Outline of the paper}
In Section~\ref{sec:problem_description} we formalize the standard control problem studied in the paper, spelling out and discussing in particular the technical assumptions on the coefficients in the dynamics of $Y^U$ and in the cost functional $L=L(Y^U,U)$.

In Section~\ref{sec:notat-basic-defin} we provide an introduction to signatures and provide tools needed from rough path theory. We also describe the natural (deterministic!) rough path setting for the analysis of stochastic optimal control problems, namely \emph{stopped rough paths}; see also \cite{kalsi2020optimal}. We also recall necessary prerequisites from the theory of rough differential equations (RDEs), and 

Section~\ref{sec:continuous_approximation} introduces two signature-based classes of controls to approximate general progressively measurable controls $U$. Specifically, we consider \emph{linear signature policies} in $\Asig$ which are just linear functionals of the present signature. 
Alternatively, \emph{deep signature controls} from $\Alog$ are obtained by using the path's signature (or, more precisely, its log-signature) as input in a neural network. Proposition~\ref{prop:dens_sig_controls} shows that both linear signature controls and deep signature controls are dense in the set $\mathcal{A}$ of admissible controls.
By a new stability result for RDEs with a function-valued parameter (Theorem~\ref{thm:diehl_et_al}) --- think of the control as a function in time --- convergence of the controls implies convergence of the controlled process $\mathbb{Y}^U$ (Lemma~\ref{lem:continuity-Y-controlled}), and this further implies that the associated costs also converge. Theorem~\ref{thm:approximation} then concludes that the infimum of expected costs is the same over the three policy classes $\mathcal{A}$, $\Asig$, and $\Alog$.

Finally, Section~\ref{sec:numerics} gives details of the numerical method suggested, together with a discussion of numerical properties and possible extensions. We then provide two case studies. In the first, the driving process $X$ is a fractional Brownian motion, which we try to force to stay close to $0$ by controlling its drift. In order to obtain a well-posed problem, we penalize by the $L^2$ norm of the control. This problem's value is available analytically in closed form from \cite{bank2017hedging} and can thus be used as a benchmark. We observe excellent accuracy for both linear and deep signature methods.

In the second case study, we revisit an optimal execution problem already studied in \cite{kalsi2020optimal}. Here, the driving process, again chosen to be fractional Brownian motion, corresponds to the fundamental price of a financial asset. We want to liquidate a position in that asset while maximizing the proceeds from selling which are adversely affected by price impact. The bespoke optimization procedure of \cite{kalsi2020optimal} provides another valuable benchmark that we are able to match with our methodology.

\vspace{1em}
\noindent\textbf{Acknowledgments.}
We would like to thank Peter K.~Friz and Imanol Perez-Arribas for helpful discussions. 
Peter Bank and Christian Bayer gratefully acknowledge funding by Deutsche Forschungsgemeinschaft through SFB TRR 388 Project B03.

\section{An optimal control problem of a rough differential system}\label{sec:problem_description}
Let us start by describing more precisely the class of optimization problems we consider in this paper.
We fix an underlying complete filtered probability space  $\left(\Omega, \mathcal{F}, (\mathcal{F}_t)_{t \geq 0}, \P\right)$. \emph{Admissible controls} will be $(\mathcal{F}_t)$-progressively measurable processes $U: [0,T] \times \Omega \to \mathcal{U}$ taking values in %
$\mathcal{U}$, a closed, convex and nonempty subset of $\mathbb{R}^k$.
The set of such admissible controls will be denoted by $\mathcal{A}$. Any realization $u=U(\omega):[0,T] \to \mathcal{U}$ can then be viewed as an element of $L^0(\d{t};\mathcal{U})$, the space of measurable functions $u: [0,T] \to \mathcal{U}$ equipped with the topology of convergence in (Lebesgue) measure $\mathrm{Leb}(\d t)=\d t$.

We study the optimal control of the differential dynamics
\begin{align}\label{eq:main_srde}
\d Y^{U}_t = b(Y^{U}_t, U_t)\d t + \sigma(Y^{U}_t)\d \X_t, \qquad t \in [0,T], \quad Y^{U}_0 = y_0 \in \R^{m},
\end{align}
where $b:\R^{m}\times\mathcal{U} \to \R^{m}$, $\sigma:[0,T]\times \R^{m} \to \R^{m\times d}$ satisfy the Lipschitz assumptions
\eqref{ass:first_on_b} and \eqref{ass:on_b} specified below. The stochastic driver $\X$ is a stochastic rough path with finite $p$-variation for some $p\in[1, \infty)$. The reader can think of $\X$ as a Brownian motion, but also a fractional Brownian motion can be considered. We refer to Section~\ref{sec:preliminaries} for the precise details and an introduction to rough paths including the definition of the metric rough path space $\rpsz(\R^m)$ we will consider for our driver $\X$. 
By Theorem~\ref{thm:diehl_et_al} there exists then a unique solution $Y^{U}$ to $\eqref{eq:main_srde}$ which canonically extends to a stochastic rough path $\mathbb{Y}^U: \Omega \to \rps(\R^m)$ and depends continuously on $y_0$, $\X$ and $U$.
The optimization objective is to minimize the expected costs
$$J(U) := \E[L(\Y^{U}, U)]$$
among all admissible controls $U\in \mathcal{A}$, where the \emph{cost functional}
\begin{flalign}\label{ass:cost_functional} \text{ $L:\R^m \times \Omega_T^{p} \times L^0(\d{t};\mathcal{U}) \to \R$ is continuous and bounded.}&&
\end{flalign}
It is then clear that there is a finite \emph{optimal cost}
\begin{align*}
\inf_{U\in\mathcal{A}}J(U).
\end{align*}
In Section~\ref{sec:signature_strategies} below we will characterize subclasses of admissible controls, sufficient for approximating the optimal cost. In fact, assuming additionally that
\begin{flalign}\label{ass:filtration}
\text{ $\F_t$ is the completion of $\sigma(\X_s \;\vert\; 0 \le s \le t)$ by all $\P$-null sets of $\F$, $0 \leq t \leq T,$ }&&
\end{flalign}
it will turn out that it suffices to restrict to controls that are continuous functions on the path-space $\Omega^p_T$. This fact allows us to deduce that it also suffices to restrict to controls that are (linear) functionals of the signature of the time-augmented path $\widehat\X$ defined below.

In the rest of this section we will discuss the above assumptions in more detail and comment on some possible generalizations.

\subsection*{Form of the dynamics}
Drift-controlled stochastic differential equations of the form \eqref{eq:main_sde} find many applications in various fields.
Studying the rough path extension \eqref{eq:main_srde} allows us to treat non-semimartingale drivers $\X$ in a unified framework, including non-Markovian processes such as fractional Brownian motion.
The price to pay for this generality is that we cannot allow for a controlled diffusion part as in
\begin{align}\label{eq:vol_controlled}
\d Y^{U}_t = b(Y^{U}_t, U_t)\d t + \sigma(Y^{U}_t, U_t)\d \X_t.
\end{align}
This is because the stochastic part of these dynamics will only be defined rigorously for a general rough path $\X$ if the control $U$ is \emph{controlled} by $\X$, that is, roughly speaking, it cannot fluctuate more than $\X$ itself; see \cite{friz2020course} for an introduction to rough differential equations using \emph{Gubinelli derivatives}. 
A stability result for systems of the form \eqref{eq:vol_controlled} similar to Theorem~\ref{thm:diehl_et_al} has recently been established in \cite{allan2020pathwise} for the case $p\in[2,3)$ under the assumption that the controls $U$ have finite $p/2$-variation.
In contrast to the present work, the authors in \cite{diehl2017stochastic, allan2020pathwise} have considered a pathwise ``anticipating'' control of the system \eqref{eq:main_srde}.
In \cite{diehl2017stochastic} the restriction to drift-controlled system was imposed for non-degeneracy reasons, while \cite{allan2020pathwise} included control in the volatility
by introducing a penalty term that ensures the required regularity of controls similar to a Tychonov regularization.

We stress that the results from Section~\ref{sec:continuous_approximation} are not tied to the stability results for rough differential equations.
Instead, we can study the optimal control of stochastic differential equations for which we obtain stability under weaker conditions. In fact, referring to \cite{protter2005stochastic} one even has sufficient stability for path-dependent stochastic differential equations of the form
\begin{align}\label{eq:functional_sde}
    \d{Y_t} = b(U_t, Y\vert_{[0,t]})\d{t} + \sigma(U_t, Y\vert_{[0,t]})\d{X_t},
\end{align}
when $X$ is a continuous semimartingale, assuming that $b$ and $\sigma$ are bounded and \emph{functional} Lipschitz uniformly in $U$.
Thus, to adapt our approximation results from Section~\ref{sec:continuous_approximation} to cover the optimal control of \eqref{eq:functional_sde} we only need to modify the assumption on the cost functional.
More precisely, we need to require the continuous dependence of $L$ on $Y$, not in the rough path sense, but in the uniform topology on $C([0,T];\R^m)$ (see also the comments on the form of the cost functional below).
This even extends to the case of càdlàg semimartingales, where the canonical lift is given by the iterated Marcus-integral (see \cite{friz2027general} and \cite{chevyrev2019canonical}) and corresponding universal approximation theorems have recently been provided in \cite{cuchiero2024universal}.

\subsection*{Form of the cost functional} 
A general form of cost functionals satisfying the continuity assumption in \eqref{ass:cost_functional} is given by
\begin{align}\label{eq:running_costs}
L(\by, u):= \int_0^{T} f(t, y_t, u_t) \d{t} + g(y_T),  \quad \by\in\Omega_T^p, \quad u \in L^0(\d{t};\mathcal{U}),
\end{align}
where $f:[0,T]\times\R^{m}\times \mathcal{U} \to \R$, $g:\R^{m} \to \R$  are continuous and bounded.
In view of such ``classical'' cost functionals the definition of $L$ as a map on the rough path space may seem cumbersome. 
However, this definition does not lead to additional complications and allows us to consider cost functionals involving rough integrals such as
\begin{align*}
L(\by, u):= g\left(\int_0^{T} f(y_t) \d{\by_t}\right), \qquad \by\in\Omega_T^p, \quad u \in L^0(\d{t};\mathcal{U}),
\end{align*}
where $f\in\mathrm{Lip}^\gamma(\R^m\times\mathcal{U}; \R)$ for some $\gamma > p$ (see the next section for the definition of this space) and $g:\R\to\R$ is continuous and bounded (see \cite[Section  10.6]{FV10} for definition of the rough integral and its continuity properties).

\subsection*{Boundedness of the cost functional}
Assumption \eqref{ass:cost_functional} allows us to disentangle the problem of approximating a given control from the convergence of the associated costs.
More precisely, we prove the approximation of admissible controls in $\mathcal{A}$ by controls in a subclass $\mathcal{A}^{\prime} \subset \mathcal{A}$ for almost sure convergence in $L^0(\d{t}; \mathcal{U})$.
The convergence of the associated costs then follows from the stability result in Theorem~\ref{thm:diehl_et_al} and the dominated convergence theorem.
If one were to consider unbounded cost functionals this disentanglement would in general not be possible.
Firstly, the set of admissible controls is intrinsically related to the cost functional by $\mathcal{A} \subset \{U \text{ prog. meas.} \;\vert\; \E[\vert L(\Y^U, \X)\vert] < \infty\}$.
Secondly, the mode of convergence for approximating controls needs to be chosen accordingly to deduce the convergence of costs. Hence, the theoretical analysis becomes more problem-specific and possibly requires more elaborate universal approximation results for signature functionals.

To be more specific about the last statement, consider a quadratic control problem with a cost functional of the form
\begin{align*}
L(\by, u):= \int_0^{T} \left\{ a\vert y_t \vert^2 + b\vert u_t\vert^2 \right\} \d{t},  \quad \by\in\Omega_T^p, \quad u \in L^0(\d{t};\mathcal{U}),
\end{align*}
where $a, b >0$ are constants and $\mathcal{U} = \R^k$.
We consider examples of such problems in Section~\ref{sec:optimal_tracking} and \ref{sec:optimal_execution}.
In this case, assuming square-integrability of $\Vert \X \Vert_{p-\mathrm{var}}$, one needs to prove convergence of the approximating sequence of controls in ${L^2(\d{t}\otimes\d\P)}$.
For the sub-class of signature controls this cannot be easily deduced from the universal approximation in Proposition~\ref{prop:universial_signature_stopped}, since outside the chosen compact sets there is no general way to bound the costs incurred.
Note, however, that recently global universal approximation theorems have become available for robust (normalized) signatures in \cite{chevyrev2022signature} and on weighted functions spaces in  \cite{cuchiero2023global}.
In particular, \cite{bayer2023primal}  provide a corresponding universal approximation for processes in ${L^2(\d{t}\otimes\d\P)}$, thus allowing to directly adapt our theoretical results to the case of quadratic control problems.

\subsection*{Initial value and history} 
The assumption that $\X$ takes values in $\rpsz$ implies a deterministic starting value $\X_0$.
A straight forward generalization is an inclusion of a finite history of the path, i.e., by assuming that $\X$ is defined on the interval $[t_0, T]$ for some $t_0 < 0$ and then setting $\F_t$ to the completion of $\sigma(\X_s \;\vert\;t_0 \le s \le t)$ for all $t\in[0,T]$.
In this case, the approximation results from Section~\ref{sec:continuous_approximation} are extended by considering controls that are functionals of the signature started at time $t_0$.

\section{Preliminaries on rough analysis and signatures}\label{sec:preliminaries} 
\label{sec:notat-basic-defin}

We are going to introduce the basic definitions and notation needed for the understanding rough differential equations and signatures. These definitions are standard in the rough path literature, we refer to \cite{LCL07, friz2020course,FV10} for a more detailed exposition.

\subsection{The tensor algebra}\label{sec:tensor}
The sequence of iterated integrals of a smooth path satisfies algebraic relations that are consequence of the linearity of the integral and the integration by parts rule.
These algebraic properties are most conveniently revealed when organizing these intergals as  \emph{tensor series}.
In this section we introduce the basic algebraic concepts that will allow us to define signatures and rough paths.

 Let $V$ be a finite-dimensional $\R$-vector space with basis $\{e_1,\ldots,e_d\}$. We define the \emph{extended tensor algebra} by setting
 \begin{align*}
   T((V)) \coloneqq \prod_{n = 0}^{\infty} V^{\otimes n}
 \end{align*}
 where $V^{\otimes n}$ denotes the $n$-th tensor power of $V$ with the convention $V^{\otimes 0} \coloneqq \R$, ${V^{\otimes 1} \coloneqq V}$. 
 The algebraic structure on $T((V))$ is given by componentwise summation and tensor multiplication, i.e., for two tensor series $\ba = (a_n)_{n = 0}^{\infty}$ and $ \bb = (b_n)_{n = 0}^{\infty}$ in $T((V))$ the tensor product is defined by
 \begin{align*}
     \ba \otimes \bb = \left(\sum_{k=0}^n a_k \otimes b_{n-k} \right)_{n=0,1,\dots}.
 \end{align*}
 We denote by $\mathbf{0} \coloneqq (0, 0, \dots )$ and $\mathbf{1} \coloneqq (1, 0, \dots )$ the neutral elements of summation and multiplication.
 The \emph{tensor algebra} $T(V) \subset T((V))$ consists of those tensor series with only finitely many non-zero elements, which defines a subalgebra.
 There is a natural pairing between $T((V))$ and $T(V)$ given by
 \begin{align*}
    \langle \cdot,\cdot \rangle \colon T(V) \times T((V)) \to \R, \quad \langle \ba, \bb\rangle := a_0b_0 + \sum_{k=1}^\infty \langle a_k, b_k\rangle,
 \end{align*}
 where the summation is finite by the definition of $T(V)$ and $\langle a_k, b_k \rangle$ denotes the dot product\footnote{This inner product on $V^{\otimes k}$ is defined as the bilinear extension of $\langle e_{i_1} \otimes \cdots \otimes e_{i_k}, e_{j_1} \otimes \cdots \otimes e_{j_k}\rangle := \langle e_{i_1}, e_{j_1}\rangle \cdots \langle e_{i_k}, e_{j_k}\rangle := \delta_{i_1, j_1} \cdots \delta_{i_k, j_k}$, where $\delta$ denotes the Kronecker-Delta.} on $V^{\otimes k}$.
 We will frequently use this pairing by associating an element $\ell \in T(V)$ to the linear functional on the extended tensor algebra $\langle \ell, \cdot\rangle : T((V)) \to \R$.

 The \emph{truncated tensor algebra} is defined by
 \begin{align*}
  T^{N}(V) \coloneqq \bigoplus_{n = 0}^{N} V^{\otimes n}.
 \end{align*}
 We define maps $\pi_n \colon T((V)) \to V^{\otimes n}$ and $\pi_{\leq N} \colon T((V)) \to T^{N}(V)$ by $\pi_n(\mathbf{a}) = a_n$ and $\pi_{\leq N}(\mathbf{a}) = (a_0,\ldots,a_N)$.
Note that $T^{N}(V)$ forms an algebra under the truncated tensor multiplication $\mathbf{a}\,{\otimes}_{N}\,\mathbf{b}$ $:=$ $\pi_{\le N}(\mathbf{a}\otimes\mathbf{b}),$  for $\mathbf{a},\mathbf{b}\in T^N(V)$. However, we will not distinguish between the multiplication symbols on $T^N(V)$, $T(V)$ and $T((V))$ and use $\otimes$ in all cases and write $\mathbf{0}$ and $\mathbf{1}$ for the neutral elements $\pi_{\leq N}(\mathbf{0})$ and $\pi_{\leq N}(\mathbf{1})$ in the truncated tensor algebra.
Furthermore, $T^N(V)$ is a finite dimensional vector space which we equip with the norm
\begin{align*}
    \vert \ba \vert = \sum_{n=0}^N |a_n|, \qquad \ba \in T^N(V),
\end{align*}
where $\vert \cdot \vert = \sqrt{\langle \cdot, \cdot \rangle}$ denotes the Euclidean norm on $V^{\otimes n} \cong \R^{d^n}$ for any $n\in\N$.

 The Lie-algebra generated from $\{\mathbf{e}_1, \dots, \mathbf{e}_d\}$, where $\mathbf{e}_i :=(0, e_i, 0, \dots) \in T(V)$, with the commutator bracket $[\ba, \bb] = \ba\otimes\bb - \bb\otimes\ba$ for $\ba, \bb \in T(V)$ is called the \emph{free Lie-algebra} $\mathfrak{g}(V)$ over $V$. Its extension $$\mathfrak{g}((V)) = \{\bx \in T((V))\;\vert\; \pi_{\le N}(\bx) \in \mathfrak{g}(V) \text{ for all }N\ge 0\}$$ is called the set of free Lie-series over $\{\mathbf{e}_1, \dots, \mathbf{e}_d\}$ and forms a sub algebra of the set of tensor series starting with zero $T((V))_0 = \{\ba \in T((V)) \;\vert\; \pi_0(\ba) = 0\}.$
 The tensor exponential of $\mathfrak{g}((V))$,
 i.e., its image under the map
 \begin{align}\label{eq:tensor_exp}
     \exp_\otimes: T((V))_0 \to T((V)), \quad \mathbf{a} \mapsto \mathbf{1} +\sum_{n=1}^\infty \frac{1}{n!}\ba^{\otimes n},
 \end{align}
is called the group of \textit{group-like elements} $G(V) := \exp_\otimes(\mathfrak{g}((V)))$, which is a subgroup of $T((V))_1 = \{\ba \in T((V)) \;\vert\; \pi_0(\ba) = 1\}$.
Indeed, $(G(V), \otimes)$ is a group with identity $\mathbf{1}$, where for $\bg = \exp_{\otimes}(\ba)\in G(V)$ the inverse element is given by $\bg^{-1} = \exp_\otimes(-\ba)$.
The inverse of the operation \eqref{eq:tensor_exp} is given by the tensor logarithm
  \begin{align}
     \log_\otimes: T((V))_1 \to T((V))_0, \quad (\mathbf{1}+\mathbf{a}) \mapsto \mathbf{1} +\sum_{n=1}^\infty \frac{(-1)^{n+1}}{n}\ba^{\otimes n}.
 \end{align}
 
 We also set $\mathfrak{g}^N(V) := \pi_{\leq N}(\mathfrak{g}(V))$ and $G^N(V) \coloneqq \pi_{\leq N}(G(V))$, which are the \emph{free nilpotent Lie algebra} and \emph{group} of order $N$. We equip these spaces with the subspace topology in $T^N(V)$.
 The truncated tensor exponential and logarithm $$\exp^N_\otimes: T^N(V)_0 \to T^N(V)_1, \qquad \log^N_\otimes: T^N(V)_1 \to T^N(V)_0,$$
 are defined using the corresponding (finite!) power series in the truncated tensor algebra.
 Hence, these maps are smooth, and, furthermore, it holds $\log_\otimes^N = (\exp_\otimes^N)^{-1}$ and  $G^N(V) = \exp^N_\otimes(\mathfrak{g}^N(V))$.

 \subsection{Rough paths and their signatures}\label{sec:rough_paths}

 One can start by defining the signature for a piecewise smooth path $x:[0,T] \to V$, i.e., assume that the derivative $\dot{x}$ is integrable and has only finitely many points of discontinuity.
 We define its signature $\Sig{x}_{s,t}$ over the interval $[s,t] \subset [0,T]$ as the tensor series of iterated integrals. More precisely, $\Sig{x}_{s,t} \in T((V))_1$ and for all $n=1,2,\dots$ we set
\begin{align*}
\pi_n(\Sig{x}_{s,t})&\coloneqq\int_{s}^t \int_s^{t_k} \cdots \int_{s}^{t_2} \dot{x}(t_1) \otimes\cdots \otimes \dot{x}(t_n) \, \d{t_1} \cdots \d{t_n}.
\end{align*}
One can show that as consequence of the integration by parts rule such signatures are elements of the free Lie group $G(V)$, i.e., $\Sig{x}: \Delta_T \to G(V)$, where $\Delta_T \coloneqq \{(s,t) \in [0,T]^2 \, :\, 0 \leq s \leq t \leq T\}$. Moreover, we have the Chen relation (see \cite{chen1957integration})
\begin{align}\label{eq:chen}
    \Sig{x}_{s,t} = \Sig{x}_{s,u} \otimes \Sig{x}_{u,t}, \qquad 0 \le s \le u \le t \le T.
\end{align}
We set $\Sig{x}_t \coloneqq \Sig{x}_{0,t}$ and also define the truncated signature by $\Sig{x}^{\le N}\coloneqq \pi_{\le N}(\Sig{x})$.

The space $\rps(V)$ of \emph{geometric $p$-rough paths} for some $p\ge1$ is constructed as the closure of the set of smooth paths under a $p$-variation metric that also measures the first $\integer{p}$-levels of the signature, where $\integer{p}$ denotes the integer part of $p$.
To make this more precise, we first define the \emph{$p$-variation}-distance of two group valued paths $\bx, \by \colon [0,T] \to G^{N}(V)$ by
\begin{align*}
 d_{p-\mathrm{var};[s,t]}(\bx,& \by) \coloneqq \max_{k = 1,\ldots,N} \sup_{\mathcal{D} \subset [s,t]} \left( \sum_{t_i \in \mathcal{D}} \Big\vert\pi_k\Big(\bx(t_i)^{-1}\otimes\bx(t_{i+1}) - \by(t_i)^{-1}\otimes\by(t_{i+1})\Big)\Big\vert^{\frac{p}{k}} \right)^{\frac{k}{p}}
\end{align*}
where the supremum ranges over all partitions $\mathcal{D}=\{t_i\}$ of $[s,t]$.
We use the notation $d_{p-\mathrm{var}}(\bx, \by) \coloneqq d_{p-\mathrm{var};[0,T]}(\bx, \by)$.
The space of geometric $p$-rough paths $\rps(V)$ is then defined as the set of continuous paths $\bx \colon [0,T] \to G^{\integer{p}}(V)$ that satisfy
\begin{enumerate}[label=(\roman*)]
    \item $d_{p- \mathrm{var};[s,t]}(\mathbf{1}, \bx)<\infty$,%
    \item there exists a sequence of piecewise smooth paths $(x_n)_{n\ge1}$ with $x_n : [0,T] \to V$ such that $$\lim_{n\to\infty} d_{p-\mathrm{var}}\big(\bx,\, \Sig{x_n}^{\le \integer{p}}\big) = 0.$$
\end{enumerate}
Unless stated otherwise we will abbreviate $\rps = \rps(V)$.
On the subset
$$\rpsz := \{ \bx \in \rps \;\vert\; \bx(0) = \mathbf{1}\},$$
$d_{p-\mathrm{var}}$ defines a metric. We can similarly equip $\rps$ with a metric by additionally comparing starting values.
With the thus obtained topology $\rpsz$ and $\rps$ are Polish spaces.

Lyons Extension Theorem (see \cite[Theorem 3.7]{LCL07}; for closer terminology, see \cite[Theorem 9.5]{FV10}) states that to every geometric rough path $\bx \in \rpsz$ we can associate a unique map $\Sig{\bx}: \Delta_T \to G(V)$ (also called the full lift) that satisfies the Chen relation \eqref{eq:chen} and is such that $\Sig{\bx}^{{\leq \integer{p}}}_{s,t} \coloneqq \pi_{\leq \integer{p}}(\Sig{\bx}_{s,t}) = \bx(s)^{-1}\bx(t)$ for all $(s,t)\in\Delta_T$.
Furthermore, \cite[Corollary~9.11]{FV10} implies that for every $N \ge \integer{p}$
the truncated signature map $$\Omega^{p, 0}_T \to G^{N}(V), \qquad \bx\mapsto \Sig{\bx}^{\le N}_{0,T}$$ is continuous. 
Note that for a piecewise smooth path $x:[0,T] \to V$ we have $\Sig{x}_{0,\cdot}^{\le \integer{p}} \in \rpsz$ for any $p \ge 1$. Furthermore, the signature in the rough path sense is consistent with signature in the smooth sense, i.e., $\Sig{\Sig{x}^{\le \integer{p}}} = \Sig{x}$.

We say that a continuous path $x: [0,T] \to V$ has a \emph{lift} to a geometric $p$-rough path, if there exists $\bx \in \rps$ such that $\pi_1(\bx) = x$.
Note that this lift is not unique, even though there often is a canonical choice. For instance, this is the case for solutions to rough differential equations (see the Section~\ref{sec:rde_intro}) and for many stochastic process.
The canonical lift of a Brownian motion $B$, for example, is obtained by suitable piecewise linear approximations $(B^n)_{n\ge 1}$. It can be shown that for any $p \in (2,3)$ we have %
\begin{align*}
    \Sig{B^n}^{\le 2} ~\xrightarrow[n \to \infty]{\rps}~ \B := \left(1, B, \left(\int_{0}^{\cdot} B^i_s  \circ \d{B^j_s}\right)_{i,j=1, \dots, d}\right) \qquad \text{ almost surely}, 
\end{align*}
where ``$\circ \d{B}$'' denotes the Stratonovich integration with respect to $B$.
In particular, $\B$ defines a $\rps$ valued random-variable, which we call the \emph{Stratonovich-lift} of the Brownian motion $B$.

\subsection{Stopped rough paths}\label{sec:stopped_rp}
We can define a stochastic rough path as an $\rps$-valued random variable.
To introduce the notion of adaptedness,  we will need to consider functionals of the restriction of a rough path to subintervals of $[0,T]$.
This calls for the definition of an appropriate ambient space $\Lambda_T$, where we can compare paths defined on different segments of the time line.
For rough paths this was originally proposed in \cite{kalsi2020optimal}, then elaborated in \cite{BHRS23}, and is motivated by the functional Itô calculus, see \cite{Dup19, ContFournie10}.

We call $\Lambda _T \coloneqq \bigcup_{t \in [0,T]} \rpszt$ the \emph{space of stopped rough paths}. Note that
\begin{align*}
\Lambda _T  %
&= \{ \bx\vert_{[0,t]} \;\vert\; \bx \in \rpsz, \quad t\in [0,T]\}.
\end{align*}
Following \cite{ContFournie10} and \cite{cuchiero2024signature} we also call a map $f: \Lambda_T^p \to \R$ a \emph{non-anticipative functional}. To indicate from which set  $\rpszt$ and element $\bx \in \Lambda_T$ is chosen, we will write it as $\bx=\bx^t$.

To define a suitable topology on $\Lambda_T$, note that we can extend a path segment $\bx^t \in \rpszt$ by constant extrapolation $\bx^t(\cdot \wedge t) \in \rpsz$.
We equip $\Lambda_T$ with the metric
  \begin{align*}
    d(\bx^t, \by^s) \coloneqq d_{p-\mathrm{var}}\big(\bx^t(\cdot \wedge t), \by^s(\cdot \wedge s)\big) + |t-s|, \qquad \bx^t, \by^s \in \Lambda_T.
  \end{align*}

Clearly the maps $\Lambda_T \to \rpsz$, $\bx^{t}\mapsto \bx^t(\cdot \wedge t)$ and $\Lambda_T \to [0,T]$, $\bx^{t}\mapsto t$ are continuous and it can be seen that $\Lambda_T$ inherits some of the topological properties of $\Omega^{0,p}_T$.
 More precisely, we have the following result from \cite[Appendix~A]{BHRS23}.
 
\begin{proposition}\label{prop:lambda_topology}
    $\Lambda_T$ is a Polish space. Furthermore, the map $\varphi: [0,T]\times\rpsz \to \Lambda_T$, $(t,\bx) \mapsto {\bx}\vert_{[0,t]}$ is continuos.
\end{proposition}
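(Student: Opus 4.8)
The plan is to realise $(\Lambda_T,d)$ as a metric subspace of the Polish space $\rpsz\times[0,T]$ via the map $\iota\colon\Lambda_T\to\rpsz\times[0,T]$, $\bx^t\mapsto(\bx^t(\cdot\wedge t),\,t)$. Taking $d_{p-\mathrm{var}}+|\cdot-\cdot|$ as metric on the product, $\iota$ is an isometry onto its image straight from the definition of $d$, and by the observation preceding the statement the topology of $\Lambda_T$ is the associated subspace topology. Separability of $\Lambda_T$ is then immediate, since subspaces of separable metric spaces are separable. For completeness it suffices to show that $\iota(\Lambda_T)=\{(\bz,t)\in\rpsz\times[0,T]:\bz=\bz(\cdot\wedge t)\}$ is closed. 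Let $(\bz_n,t_n)\in\iota(\Lambda_T)$ converge to $(\bz,t)$ in $\rpsz\times[0,T]$; I must check that $\bz$ is constant on $[t,T]$ (which is vacuous if $t=T$). Fix $t<u<v\le T$. As $t_n\to t$, for $n$ large $t_n<u$, so $\bz_n$ is constant on $[u,v]\subset[t_n,T]$, i.e.\ $d_{p-\mathrm{var};[u,v]}(\mathbf{1},\bz_n)=0$. Since restriction to a subinterval does not increase the $p$-variation distance, $d_{p-\mathrm{var};[u,v]}(\mathbf{1},\bz)\le d_{p-\mathrm{var};[u,v]}(\mathbf{1},\bz_n)+d_{p-\mathrm{var};[u,v]}(\bz_n,\bz)\le d_{p-\mathrm{var}}(\bz_n,\bz)\to 0$, hence $d_{p-\mathrm{var};[u,v]}(\mathbf{1},\bz)=0$ and $\bz$ is constant on $[u,v]$. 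As $\bz$ is continuous and $u,v$ arbitrary, $\bz$ is constant on $[t,T]$, so $(\bz,t)\in\iota(\Lambda_T)$. Thus $\iota(\Lambda_T)$ is closed, hence complete, and $\Lambda_T$ is Polish.

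Write $\tau_d$ for the metric topology on $\Lambda_T$ and $\tau_f$ for the final topology induced by $\varphi$. I would first prove that $\varphi$ is continuous as a map into $(\Lambda_T,\tau_d)$. If $(t_n,\bx_n)\to(t,\bx)$ in $[0,T]\times\rpsz$, then $d(\varphi(t_n,\bx_n),\varphi(t,\bx))=d_{p-\mathrm{var}}(\bx_n(\cdot\wedge t_n),\bx(\cdot\wedge t))+|t_n-t|$, and $|t_n-t|\to 0$, while
\[
d_{p-\mathrm{var}}(\bx_n(\cdot\wedge t_n),\bx(\cdot\wedge t))\le d_{p-\mathrm{var}}(\bx_n(\cdot\wedge t_n),\bx(\cdot\wedge t_n))+d_{p-\mathrm{var}}(\bx(\cdot\wedge t_n),\bx(\cdot\wedge t)).
\]
For the first summand I would use that stopping at a \emph{fixed} time $s$ is $1$-Lipschitz, $d_{p-\mathrm{var}}(\bz_1(\cdot\wedge s),\bz_2(\cdot\wedge s))\le d_{p-\mathrm{var};[0,s]}(\bz_1,\bz_2)\le d_{p-\mathrm{var}}(\bz_1,\bz_2)$, which follows by matching each partition of $[0,T]$ with the partition of $[0,s]$ it induces (the increments of $\bz_i(\cdot\wedge s)$ beyond $s$ being trivial); since $d_{p-\mathrm{var}}(\bx_n,\bx)\to 0$, this summand vanishes. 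For the second summand, for the \emph{fixed} rough path $\bx$ one has $d_{p-\mathrm{var}}(\bx(\cdot\wedge t_n),\bx(\cdot\wedge t))\to 0$ as $t_n\to t$, because this distance is controlled by the $p$-variation of $\bx$ over the shrinking interval $[t\wedge t_n,\,t\vee t_n]$, and $\bx$ being a continuous path of finite $p$-variation forces $d_{p-\mathrm{var};[a,b]}(\mathbf{1},\bx)\to 0$ as $b-a\to 0$. Continuity of $\varphi$ gives $\tau_d\subseteq\tau_f$.

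For the reverse inclusion I would exhibit a continuous right inverse of $\varphi$, namely $\psi\colon(\Lambda_T,\tau_d)\to[0,T]\times\rpsz$, $\bx^t\mapsto(t,\,\bx^t(\cdot\wedge t))$; this is continuous (even $1$-Lipschitz) because, by the observation preceding the statement, $\tau_d$ is the initial topology of its two coordinate maps $\bx^t\mapsto t$ and $\bx^t\mapsto\bx^t(\cdot\wedge t)$, and $\varphi\circ\psi=\mathrm{id}_{\Lambda_T}$ since $\bx^t(\cdot\wedge t)\vert_{[0,t]}=\bx^t$. Hence for every $A\in\tau_f$ the set $\varphi^{-1}(A)$ is open, so $A=(\varphi\circ\psi)^{-1}(A)=\psi^{-1}(\varphi^{-1}(A))\in\tau_d$, i.e.\ $\tau_f\subseteq\tau_d$. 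Combined with the previous paragraph this gives $\tau_f=\tau_d$, so $\varphi$ is a topological quotient map and $\Lambda_T$ indeed carries the final topology induced by $\varphi$.

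The one step that is not routine is the continuity of the ``stopping'' map $(\bz,t)\mapsto\bz(\cdot\wedge t)$ on $\rpsz\times[0,T]$, concretely the claim $d_{p-\mathrm{var}}(\bx(\cdot\wedge s),\bx(\cdot\wedge s'))\to 0$ as $s'\to s$ for fixed $\bx$. At tensor level one this is merely continuity of the control function of the path $\pi_1(\bx)$, but at higher levels the Chen relation couples the increment over the freshly appended segment $[s\wedge s',\,s\vee s']$ with the earlier portion of $\bx$, so one needs the quantitative locality and concatenation estimates for the $p$-variation metric on geometric rough paths (as developed in \cite{FV10} and used in \cite{BHRS23}) rather than a one-line argument; by contrast, the embedding, the closedness of its image, and the section $\psi$ are all soft.
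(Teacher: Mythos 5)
The paper does not prove this proposition in-text; it cites \cite[Appendix~A]{BHRS23}, so there is no internal proof to compare against. Your argument is correct and essentially self-contained. The isometric embedding $\iota$ of $\Lambda_T$ onto the set $\{(\bz,t)\in\rpsz\times[0,T]:\bz=\bz(\cdot\wedge t)\}$, shown closed via the $d_{p-\mathrm{var};[u,v]}$-restriction argument, gives Polishness; and since $\varphi$ is continuous and admits the continuous section $\psi(\bx^t)=(t,\bx^t(\cdot\wedge t))$, it is automatically a quotient map, so the metric and final topologies on $\Lambda_T$ coincide. The step you yourself flag as non-elementary --- continuity of $s\mapsto\bx(\cdot\wedge s)$ for a fixed $\bx\in\rpsz$ --- is indeed the only one that requires rough-path machinery: at levels $k\ge 2$ the quantity $d_{p-\mathrm{var}}(\bx(\cdot\wedge t_n),\bx(\cdot\wedge t))$ is bounded by a constant (depending on $p$ and on $\Vert\bx\Vert_{p-\mathrm{var};[0,T]}$) times the $p$-variation of $\bx$ over $[t\wedge t_n,\,t\vee t_n]$, rather than being equal to it, because the straddling increment in Chen's relation couples to the earlier portion of the path; that $p$-variation vanishes by superadditivity of the control of $\bx$, so the conclusion holds and referring to \cite{FV10} for the concatenation estimate is appropriate. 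The remaining pieces (the matched-partition argument for the $1$-Lipschitzity of stopping at a fixed time, the closedness of $\iota(\Lambda_T)$, and the section argument) are all sound.
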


\subsection{Time augmention and universal approximation}\label{sec:augmented_rp}

In order to use the signature as a feature  on the path space, we extend paths by a running time component.
This will guarantee that the signature uniquely characterizes the path on the entire interval.
In this section we will explain how this extension works for geometric rough paths. We also explain the universality of linear signature maps; see Proposition \ref{prop:universial_signature_stopped}. 

Let $\bx \in \rpsz$ and let $(x_n)_{n\ge1}$ be a sequence of piecewise smooth paths such that $$\lim_{n\to\infty} d_{p-\mathrm{var}}\big(\bx,\, \Sig{x_n}^{\le \integer{p}}\big) = 0.$$
We extend $x_n$ to a piecewise smooth path $\hat{x}_n := t\mapsto (t, x_n(t))$.
It then follows from \cite[Theorem 9.30]{FV10} that $(\Sig{\hat{x}_n}^{\le\integer{p}})_{n\ge1}$ forms a Cauchy sequence in $\rpsz(\R\times V)$ and we denote by $\hat{\bx}$ its limit.
Furthermore, we see from the same theorem that there exists a constant $C>0$ depending on $p$ and $T$ such that 
\begin{align*}
    d_{p-\mathrm{var}}(\bx, \by) \le d_{p-\mathrm{var}}(\hat\bx, \hat\by) \le C d_{p-\mathrm{var}}(\bx, \by), \qquad \bx, \by \in \rpsz.
\end{align*}
We define by $\hatrpsz := \{ \hat{\bx} \;\vert\; \bx \in \rpsz\} \subset \rpsz(\R\times V)$ the set of \emph{time augmented geometric $p$-rough paths}.
The above estimate implies that the embedding $\bx \mapsto \hat{\bx}$ is continuous and that $\hatrpsz$ is also a Polish space. We extend the notions from the previous section to these time augmented paths by passing from the stopped rough path $\bx^t \in \Lambda_T$ to its augmented version $\hat{\bx}^t \in \hatrpsz$.

We will often use that signatures of time-augmentations depend continuously on the original rough path:
\begin{lemma}\label{lem:continuity_timesig}
    For any $N\in \mathbb{N}$, $\Lambda_T\to G^{N}(V): \; \bx^t \mapsto \Sig{\hat{\bx}^t}^{\le N}_{0,t}$ is continuous.
\end{lemma}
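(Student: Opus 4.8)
\emph{Strategy.} The plan is to transport the statement to the parameter space $[0,T]\times\rpsz$ via the final-topology description of $\Lambda_T$ and then reduce it to a continuity property of the Lyons extension that no longer involves time augmentation. By Proposition~\ref{prop:lambda_topology} the topology on $\Lambda_T$ is the final topology induced by $\varphi\colon[0,T]\times\rpsz\to\Lambda_T$, $(t,\bx)\mapsto\bx\vert_{[0,t]}$, so the map in the statement is continuous if and only if
\[
  \Phi\colon[0,T]\times\rpsz\longrightarrow G^{N}(\R\times V),\qquad \Phi(t,\bx):=\Sig{\widehat{\bx\vert_{[0,t]}}}^{\le N}_{0,t},
\]
is continuous. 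First I would record two ``locality'' identities that simplify $\Phi$. For a piecewise smooth path the time augmentation visibly commutes with restriction to $[0,t]$, and since both augmentation (from \cite[Theorem~9.32]{FV10}) and restriction are continuous on the relevant rough path spaces, passing to the limit along smooth approximations of $\bx$ yields $\widehat{\bx\vert_{[0,t]}}=\hat\bx\vert_{[0,t]}$ for all $\bx\in\rpsz$ and $t\in[0,T]$. Moreover, by the uniqueness statement in the Lyons extension theorem applied on $[0,t]$, the signature of a rough path over a subinterval coincides with the signature of its restriction, so $\Sig{\hat\bx\vert_{[0,t]}}^{\le N}_{0,t}=\Sig{\hat\bx}^{\le N}_{0,t}$. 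Hence $\Phi(t,\bx)=\Sig{\hat\bx}^{\le N}_{0,t}$, and I would factor $\Phi=\Psi\circ(\operatorname{id}_{[0,T]}\times\operatorname{aug})$, where $\operatorname{aug}\colon\rpsz(V)\to\rpsz(\R\times V)$, $\bx\mapsto\hat\bx$, is continuous (this is precisely the estimate $d_{p-\mathrm{var}}(\bx,\by)\le d_{p-\mathrm{var}}(\hat\bx,\hat\by)\le C\,d_{p-\mathrm{var}}(\bx,\by)$ recalled in Section~\ref{sec:augmented_rp}) and $\Psi(t,\bz):=\Sig{\bz}^{\le N}_{0,t}$ on $[0,T]\times\rpsz(\R\times V)$.

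It then remains to prove that $\Psi\colon[0,T]\times\rpsz(W)\to G^{N}(W)$ is continuous for $W=\R\times V$, which is a purely rough-path statement. The Lyons extension theorem, in the form that $\bz\mapsto\Sig{\bz}^{\le N}$ is continuous (in fact locally Lipschitz) from $\rpsz(W)$ into the space of $G^{N}(W)$-valued paths equipped with the $p$-variation metric --- a routine strengthening of the endpoint version stated in Section~\ref{sec:rough_paths}, see \cite{LCL07,FV10} --- implies, since $p$-variation convergence dominates uniform convergence, that $\bz_n\to\bz$ in $\rpsz(W)$ forces $\Sig{\bz_n}^{\le N}_{0,\cdot}\to\Sig{\bz}^{\le N}_{0,\cdot}$ uniformly on $[0,T]$. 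For fixed $\bz$ the map $t\mapsto\Sig{\bz}^{\le N}_{0,t}$ is a continuous path on the compact interval $[0,T]$, hence uniformly continuous with some modulus $\rho_\bz$. Thus, for $(t_n,\bz_n)\to(t,\bz)$,
\[
  \abs{\Sig{\bz_n}^{\le N}_{0,t_n}-\Sig{\bz}^{\le N}_{0,t}}\;\le\;\norm{\Sig{\bz_n}^{\le N}_{0,\cdot}-\Sig{\bz}^{\le N}_{0,\cdot}}_{\infty;[0,T]}+\rho_\bz\bigl(\abs{t_n-t}\bigr)\;\xrightarrow[n\to\infty]{}\;0 ,
\]
and since $[0,T]\times\rpsz(W)$ is metrizable, sequential continuity establishes continuity of $\Psi$, and the lemma follows.

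I expect the only genuinely delicate point to be the two locality identities of the first paragraph --- in particular verifying that the time augmentation of a \emph{stopped} rough path agrees with the restriction of the augmented path, since in this paper the augmentation is defined through smooth approximations of the whole path, so one has to check the identity truly survives the limiting procedure (restricting the smooth approximations first, or last, must give the same limit). Everything else is standard bookkeeping: continuity of $\operatorname{aug}$ is already available, and the continuity of $\Psi$ is nothing but the path-valued form of the Lyons extension theorem. If one prefers to avoid invoking that path-valued form, the last step can instead be run through Chen's relation $\Sig{\bz_n}^{\le N}_{0,t_n}=\Sig{\bz_n}^{\le N}_{0,t}\otimes\Sig{\bz_n}^{\le N}_{t,t_n}$, bounding the first factor by the endpoint continuity of the Lyons extension on $[0,t]$ and the second by $\abs{\pi_k\Sig{\bz_n}_{t,t_n}}\le C\,\norm{\bz_n}_{p-\mathrm{var};[t,t_n]}^{k}$ together with the smallness, uniform in $n$, of $\norm{\bz_n}_{p-\mathrm{var};[t,t_n]}$ as $t_n\to t$, the latter following from $d_{p-\mathrm{var};[t,t_n]}(\bz_n,\bz)\to0$ and the continuity of the control function of $\bz$; this works as well, at the cost of some care with the homogeneous $p$-variation quasi-norms.
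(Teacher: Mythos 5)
Your proposal is correct but takes a genuinely different route from the paper's. The paper never invokes the final-topology characterization of $\Lambda_T$ nor the path-valued form of the Lyons extension; instead it works with the constant extrapolation $\bx^{t,T}:=\bx^t(t\wedge\cdot)\in\rpsz$ (which, together with $t$, depends continuously on $\bx^t$ by the very definition of the metric on $\Lambda_T$) and uses Chen's relation to reduce the varying-endpoint evaluation to a \emph{fixed}-endpoint one,
\begin{align*}
\Sig{\hat{\bx}^t}_{0,t}
=\Sig{\hat{\bx}^{t,T}}_{0,t}
=\Sig{\hat{\bx}^{t,T}}_{0,T}\otimes\bigl(\Sig{\hat{\bx}^{t,T}}_{t,T}\bigr)^{-1}
=\Sig{\hat{\bx}^{t,T}}_{0,T}\otimes\exp_{\otimes}\bigl(-e_1(T-t)\bigr),
\end{align*}
exploiting that the extrapolated path is constant on $[t,T]$, so only its time component moves there. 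Continuity then follows by composing the three continuous maps $\bx^t\mapsto(t,\bx^{t,T})$, $\bx\mapsto\hat\bx$, and $\bx\mapsto\Sig{\bx}^{\le N}_{0,T}$. The paper's trick thus needs only the endpoint form of the Lyons extension already recalled in Section~\ref{sec:rough_paths}, whereas your argument passes through the stronger (but standard) path-valued, $p$-variation-continuous form of the lift; in exchange, your route dispenses with the explicit Chen computation and lets the final-topology description of Proposition~\ref{prop:lambda_topology} do the work. Note also that the ``locality'' identity you flag as the delicate point --- $\widehat{\bx\vert_{[0,t]}}=\hat\bx\vert_{[0,t]}$ --- is tacitly used in the paper's proof as well: the first equality in the display above needs $\hat{\bx}^t$ to coincide with $\widehat{\bx^{t,T}}$ restricted to $[0,t]$, which is the same commutation of augmentation with restriction. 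So your extra care there is warranted but not a burden peculiar to your approach.
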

\begin{proof} For $\bx^t\in\Lambda_T$ define the extrapolated path $\bx^{t,T} := \bx^t(t \wedge \cdot) \in \rpsz$ and its time-augmentation by $\hat{\bx}^{t,T} \in \hatrpsz$.
We then have by Chen's identity
\begin{align*}
\Sig{\hat{\bx}^t}_{0,t} &= \Sig{\hat{\bx}^{t,T}}_{0,t} \\
&= 
\Sig{\hat{\bx}^{t,T}}_{0,T} \otimes
(\Sig{\hat{\bx}^{t,T}}_{t,T})^{-1} \\
&= 
\Sig{\hat{\bx}^{t,T}}_{0,T} \otimes
\exp_{\otimes}(-e_1(T-t))
\end{align*}
The statement then follows from continuity of the maps $\bx^t \mapsto (t, \bx^{t,T})$, $\bx \mapsto \hat{\bx}$ and $\bx \mapsto \Sig{\bx}^{\le N}_{0,T}$ for any $N\in\mathbb{N}$ are continuous.
\end{proof}

We are now ready to state the approximation property of linear signature functionals within the class of continuous non-anticipative functionals.

\begin{proposition}\label{prop:universial_signature_stopped}
    For any continuous function $f: \Lambda^p_T \to \R$ and compact set $K\subset \rpsz$ there exists a sequence $(\ell_n)_{n\ge1}\subset T(V)$ such that
    \begin{align*}
        \lim_{n\to\infty} \sup_{t\in[0,T]} \sup_{\bx \in K}\Big( \big\vert f(\bx\vert_{[0,t]}) - \langle \ell_n, \Sig{\hat{\bx}\vert_{[0,t]}} \rangle \big\vert \Big) = 0.
    \end{align*}
\end{proposition}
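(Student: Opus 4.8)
The plan is to deduce this from the classical Stone--Weierstrass theorem applied to the compact metric space $K^{\Lambda} := \{\bx\vert_{[0,t]} : \bx \in K,\ t\in[0,T]\}$. First I would check that $K^{\Lambda}$ is a compact subset of $\Lambda_T$: it is the image of the compact set $[0,T]\times K$ under the continuous surjection $\varphi$ from Proposition~\ref{prop:lambda_topology}, so compactness is immediate, and the double supremum in the statement is exactly the uniform norm on $C(K^{\Lambda})$. Thus it suffices to show that the set
\begin{align*}
    \mathcal{L} := \big\{ \bx^t \mapsto \langle \ell, \Sig{\hat{\bx}^t}_{0,t}\rangle \;\big\vert\; \ell \in T(\R\times V) \big\}
\end{align*}
is dense in $C(K^{\Lambda})$ with respect to the uniform norm.

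To apply Stone--Weierstrass I would verify the three hypotheses for $\mathcal{L}$. \emph{Continuity}: each map in $\mathcal{L}$ is continuous on $\Lambda_T$ by Lemma~\ref{lem:continuity_timesig} together with continuity of the linear pairing $\langle\ell,\cdot\rangle$ on $T^N(\R\times V)$. \emph{Algebra structure}: here is the crucial point — the shuffle identity for signatures of (geometric) rough paths, $\langle \ell_1, \Sig{\hat\bx^t}_{0,t}\rangle \langle \ell_2, \Sig{\hat\bx^t}_{0,t}\rangle = \langle \ell_1 \shuffle \ell_2, \Sig{\hat\bx^t}_{0,t}\rangle$, shows $\mathcal{L}$ is closed under pointwise multiplication; it is clearly a linear subspace and contains the constants (take $\ell = \mathbf{1}$), so $\mathcal{L}$ is a subalgebra of $C(K^{\Lambda})$ containing the constants. \emph{Point separation}: this is where the time augmentation is essential. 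Given $\bx^t, \by^s \in K^{\Lambda}$ with $\bx^t \neq \by^s$ in $\Lambda_T$, I must produce $\ell$ with $\langle \ell, \Sig{\hat\bx^t}_{0,t}\rangle \neq \langle \ell, \Sig{\hat\by^s}_{0,s}\rangle$. If $t\neq s$, the first level of the time component already separates them (pick $\ell = \mathbf{e}_1$, giving values $t$ and $s$). If $t = s$ but the paths differ, then the extrapolated augmented paths $\hat\bx^{t,T}$ and $\hat\by^{t,T}$ differ as genuine (time-augmented, hence non-tree-like) geometric rough paths on $[0,T]$, so by the uniqueness-of-signature result for time-augmented paths their full signatures over $[0,T]$ differ; by Chen's relation (as in the proof of Lemma~\ref{lem:continuity_timesig}) and the fact that multiplying by the fixed invertible element $\exp_\otimes(-e_1(T-t))$ is a bijection, their signatures over $[0,t]$ also differ, and since $T(\R\times V)$ separates points of $G(\R\times V)$ via the pairing, some $\ell$ does the job.

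With all three hypotheses in place, Stone--Weierstrass gives that $\mathcal{L}$ is dense in $C(K^{\Lambda})$, which is precisely the claimed convergence: choose $\ell_n \in T(\R\times V)$ with $\sup_{\bx^t\in K^\Lambda}|f(\bx^t) - \langle \ell_n, \Sig{\hat\bx^t}_{0,t}\rangle| \to 0$. A minor bookkeeping remark: the $\ell_n$ live in $T(\R\times V)$ rather than $T(V)$, which matches the statement once one reads $\Sig{\hat\bx\vert_{[0,t]}}$ as the signature of the time-augmented path in $\R\times V$; I would make this identification explicit.

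The main obstacle is the point-separation step, and specifically the case $t=s$: it hinges on the uniqueness theorem for signatures of time-augmented geometric rough paths (the augmentation rules out tree-like cancellations, so distinct stopped paths have distinct signatures), together with the Chen-relation argument transferring non-equality of signatures over $[0,T]$ back to non-equality over $[0,t]$. Everything else — compactness of $K^\Lambda$, the algebra property via the shuffle product, continuity via Lemma~\ref{lem:continuity_timesig} — is routine once this is secured.
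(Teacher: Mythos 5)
Your proposal is correct and takes essentially the same Stone--Weierstrass route the paper sketches: compactness of the image of $[0,T]\times K$ under $\varphi$ (Proposition~\ref{prop:lambda_topology}), continuity of signature functionals via Lemma~\ref{lem:continuity_timesig}, the algebra structure via the shuffle identity, and point separation via time augmentation together with the signature-uniqueness theorem (for which the paper defers to \cite[Lemma~B.3]{kalsi2020optimal}). You fill in the point-separation argument in more detail and correctly note the minor notational slip that the $\ell_n$ should be taken in $T(\R\times V)$ rather than $T(V)$.
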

This form of universal approximation property goes back to \cite{kalsi2020optimal}. Let us sketch its proof since it highlights key structural properties of signatures which underline their usefulness for encoding path information.
From the algebraic properties of the signature, it follows that the linear functionals $\{\bx^t \mapsto \langle \ell, \Sig{\hat{\bx}^t}_{0,t}\rangle\;\vert\; \ell \in T(V)\}$ form an algebra (cf. \cite[Theorem 2.15]{LCL07}).
By Lemma~\ref{lem:continuity_timesig} it is a sub-algebra of $C(\Lambda_T; \R)$.
Since the signature characterizes the path (see \cite{hambly2010uniqueness}, \cite{boedihardjo2016signature}), the algebra is point separating (see \cite[Lemma B.3]{kalsi2020optimal} for detailed proof). 
Finally, by Proposition~\ref{prop:lambda_topology}, we have that $\varphi([0,T] \times K) \subset\Lambda_T$ is compact.
The statement then follows from the Stone-Weierstrass Theorem.

\subsection{Rough differential equations}\label{sec:rde_intro}

Starting as in Section~\ref{sec:rough_paths}, we can give meaning to a rough differential equation
\begin{align}\label{eq:introductary_rde}
    \d{y}(t) = \sigma(y(t)) \d{\bx}(t),\quad t \in [0,T], \qquad y(0) = y_0\in\R^m.
\end{align}
where $\bx \in \rpsz(\R^d)$ and $\sigma: \R^m \to \R^{m\times d}$, by first considering it for piecewise smooth paths.
Indeed, let $x:[0,T] \to \R^d$ be piecewise smooth and consider the system of ordinary integral equations
\begin{align}\label{eq:ode}
    y^k(t) = y_0^k + \sum_{i=1}^d\int_0^t  \sigma_i^k(y(s)) \dot{x}^{i}(s) \d{s}, \qquad k = 1, \dots, m, \quad t \in[0,T], \quad y_0 \in \R^m.
\end{align} 
Given that $\sigma$ is bounded and Lipschitz continuous, the above equation has a unique solution,
 which we denote by $\Gamma_\sigma(y_0; x)$.
We then say that a continuous path $y: [0,T] \to \R^m$ is a solution to the rough differential equation \eqref{eq:introductary_rde} if there exists a sequence of piecewise smooth paths $(x_n)_{n\ge1}$ such that
\begin{enumerate}[label=(\roman*)]
    \item\label{item:rde_1} $\lim_{n\to\infty} d_{p-\mathrm{var}}\big(\bx,\, \Sig{x_n}^{\le N}\big) = 0$,
    \item\label{item:rde_2} $\Gamma_\sigma(y_0; x_n) \xrightarrow[n\to\infty]{} y$ uniformly on $[0,T]$.
\end{enumerate}

In order to state the main existence and uniqueness result for rough differential equation we introduce a convenient Lipschitz space.
Let $V$ and $W$ be two Banach spaces. For $\gamma \in (0,1]$ and a map $f: V \to W$ we define
\begin{align*}
    \Vert f \Vert_{\mathrm{Lip}^\gamma(V;W)} = \max\left\{ \sup_{u \in V}\Vert f(u)\Vert,\;  \sup_{u,v \in V}\frac{\Vert f(v)-f(u)\Vert}{\Vert u-v\Vert^\gamma}\right\}
\end{align*}
and, for $\gamma > 1$ and a  $\integer{\gamma}$-times Fréchet-differentiable map $f: V \to W$, we define recursively
\begin{align*}
    \Vert f \Vert_{\mathrm{Lip}^\gamma(V;W)} = \max\left\{ \sup_{u \in V}\Vert f(u)\Vert,\;  \Vert f^\prime\Vert_{\mathrm{Lip}^{\gamma-1}(V, L(V;W))}\right\},
\end{align*}
where $L(V;W)$ denotes the Banach space of bounded linear functions from $V$ to $W$ (see \cite[Appendix~B]{FV10} for more detail).
In the following we will simply write $\Vert f \Vert_{\mathrm{Lip}^\gamma} = \Vert f \Vert_{\mathrm{Lip}^\gamma(V;W)}$ as the spaces $V$ and $W$ can be inferred from the context.
Furthermore, if we require that $\Vert f \Vert_{\mathrm{Lip}^\gamma} < \infty$ then we implicitly require that $f$ is $\integer{\gamma}$-times differentiable and we set $$\mathrm{Lip}^\gamma(V;W)\coloneqq\{ f:V\to W \;\vert\; \Vert f \Vert_{\mathrm{Lip}^\gamma} < \infty\}$$

Given that $\Vert \sigma \Vert_{\mathrm{Lip}^\gamma} < \infty$ for some $\gamma > p$ it holds that equation \eqref{eq:introductary_rde} has a unique solution $y \in C([0,T];\R^m)$,  cf. \cite[Theorem 10.26]{FV10}.
Moreover, $y$ has a unique lift to a geometric rough path $\by \in \rps(\R^m)$ such that for any sequence of piecewise smooth paths $(x_n)$ satisfying \ref{item:rde_1}  and $y_n :=\Gamma_\sigma(y_0, x_n)$ it holds
\begin{align*}
   \lim_{n\to\infty} d_{p-\mathrm{var}}(\by, \Sig{y_n}^{\le \integer{p}}) = 0.
\end{align*}
Furthermore, the so called Itô-Lyons solution map
\begin{align*}
    \R^m \times \rpsz(\R^d)\times \mathrm{Lip}^\gamma(\R^m; \R^{m\times d}) &\to \rps(\R^m)\\
    (y_0, \bx, \sigma) &\mapsto  \by
\end{align*}
is locally Lipschitz continuous  \cite[Theorem 10.38]{FV10}. We will say that $\by$ solves the \emph{full rough differential equation}
\begin{align*}
    \d{\by}(t) = \sigma(\by(t)) \d{\bx}(t),\quad t \in [0,T], \qquad \by(0) \in G^{\lfloor p \rfloor}(\R^m).
\end{align*}

\section{Approximation with signature controls}\label{sec:continuous_approximation}
We are now ready to provide the main results of this paper, namely that the general, rough and non-Markovian stochastic control problem introduced in Section~\ref{sec:problem_description}, can be solved by controls, which are functions of the signature of the driving path. 
In order to do so, we first need a new, general uniform stability result for rough differential equations whose drift depends on a measurable control, see Section~\ref{sec:drift-controlled-rde}. 
We continue by showing that progressively measurable admissible controls can be approximated by continuous controls, i.e., continuous function on the space of stopped rough paths, see Section~\ref{sec:progressive_approx}.
In Section~\ref{sec:signature_strategies} we derive an appropriate universal approximation theorem (a.k.a.~Stone-Weierstrass theorem), which allows us to conclude that signature controls (i.e., controls defined as linear functionals of the signatures or as neural networks applied to the log-signature) are dense in the set of all admissible controls.
Finally, we prove that we can solve the original stochastic optimal control problem by restricting admissible controls to signature controls, in the sense that the infimum of the expected cost is equal, see Section~\ref{sec:aprx_costs}.

\subsection{Drift-controlled rough differential equations}
\label{sec:drift-controlled-rde}

In the following, we will consider rough differential equations with a controlled drift term
\begin{align}\label{eq:rde_with_drift}
    \d{y}(t) = b(y(t), u(t)) \, \d{t} + \sigma(y(t)) \, \d{\bx}(t),\quad t \in [0,T], \qquad y(0) = y_0\in\R^m,
\end{align}
where $u: [0,T] \to \mathcal
U \subset \R^k$ is measurable and $b:\R^m \times \R^k \to \R^m$.

The smooth approximation concept for solutions of rough differential equations, as introduced in Section~\ref{sec:rde_intro}, naturally extends to equations of the form \eqref{eq:rde_with_drift}.
The following theorem establishes the existence, uniqueness and stability of solutions, which will be crucial for proving our main approximation result in Section~\ref{sec:aprx_costs}.
Substituting $\mathbf{x} = \mathbb{X}(\omega)$ and $u = U(\omega)$, the solution of \eqref{eq:rde_with_drift} corresponds to the solution of the controlled stochastic RDE \eqref{eq:main_srde} for a given realization of the noise as well as the control.

\begin{theorem}\label{thm:diehl_et_al}
        Let $\bx \in \rpsz(\R^d)$ for some $p\in [1, \infty)$ and assume that
    \begin{align}\label{ass:first_on_b}
        \sup_{a\in \mathcal{U}} \Vert b(\cdot, a) \Vert_{\Lip^1} < \infty  \qquad\text{and} \qquad \Vert \sigma \Vert_{\mathrm{Lip}^{\gamma + 1}} < \infty \quad \text{for some } \gamma > p.
    \end{align}
    Then equation \eqref{eq:rde_with_drift} has a unique solution $y$ with corresponding lift $\by \in \rps(\R^m)$ such that
        \begin{align*}
            \R^m \times \rpsz(\R^d) &\to  \rps(\R^m) \\
            (y_0, \bx) &\mapsto  \by
        \end{align*}
        is locally Lipschitz continuous uniformly in $u$.
Assuming further that
        \begin{align}\label{ass:on_b}
            \sup_{a,a^{\prime}\in \mathcal{U}}\dfrac{\Vert b(\cdot, a) - b(\cdot, a^{\prime})\Vert_{\Lip^1}}{\Vert a - a^{\prime}\Vert} <\infty
        \end{align}
        and denoting by $L^0(\d{t}; \mathcal{U}) = \{u:[0,T] \to \mathcal{U} \text{ measurable}\}$ equipped with the topology of convergence in Lebesgue-measure, then also the map
$$\Gamma_{b,\sigma}: \R^m \times \rpsz(\R^d) \times L^0(\d{t}, \mathcal{U}) \to \rps(\R^m), \quad 
    (y_0, \bx, u) \mapsto  \by,$$
    is continuous.
\end{theorem}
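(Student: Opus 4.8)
The plan is to treat \eqref{eq:rde_with_drift} as a rough differential equation \emph{without} drift, driven by the time-augmented rough path $\hat{\bx}\in\hatrpsz$ introduced in Section~\ref{sec:augmented_rp}, the control entering only through an ordinary Lebesgue integral. Indeed, \eqref{ass:first_on_b} says that the vector field $(t,y)\mapsto b(y,u(t))$ is uniformly bounded and uniformly $\Lip^1$ in its spatial argument; so, starting from piecewise smooth approximations $x_n\to\bx$ as in Section~\ref{sec:rde_intro}, the perturbed ODEs
\begin{align*}
 y_n(t) = y_0 + \int_0^t b(y_n(s),u(s))\,\d s + \int_0^t \sigma(y_n(s))\,\d x_n(s), \qquad t\in[0,T],
\end{align*}
have unique Carath\'eodory solutions $y_n$ (the coefficients being bounded, measurable in $t$ and Lipschitz in the state). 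First I would derive a priori bounds on $\Vert y_n\Vert_\infty$ and on $d_{p-\mathrm{var}}(\mathbf{1},\Sig{y_n}^{\le\integer{p}})$ that are uniform in $n$ \emph{and} in $u$: the $\sigma$-contribution is controlled, by the driftless RDE estimates recalled in Section~\ref{sec:rde_intro}, in terms of $\Vert\sigma\Vert_{\Lip^{\gamma+1}}$ and $\sup_n d_{p-\mathrm{var}}(\mathbf{1},\Sig{x_n}^{\le\integer{p}})$, whereas the drift enters only through $T\sup_a\Vert b(\cdot,a)\Vert_\infty$ and $\sup_a\Vert b(\cdot,a)\Vert_{\Lip^1}$.

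Granted these bounds, existence and uniqueness of the solution $y$ of \eqref{eq:rde_with_drift} and of its lift $\by\in\rps(\R^m)$ --- the latter being, exactly as in the driftless case, the $d_{p-\mathrm{var}}$-limit of $\Sig{y_n}^{\le\integer{p}}$ along any such sequence $x_n$ --- follow from the fixed-point and greedy-partition arguments underlying the Itô--Lyons theory (cf.\ \cite{FV10}), with the drift handled as a uniformly bounded-variation perturbation integrated in the Lebesgue sense; the extra degree of regularity $\Vert\sigma\Vert_{\Lip^{\gamma+1}}<\infty$ is what lets the rough perturbation $\sigma(y)\,\d\bx$ and the drift be controlled together in these estimates. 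The asserted local Lipschitz continuity of $(y_0,\bx)\mapsto\by$ that is \emph{uniform in} $u$ is then simply read off from the constants produced: on a bounded set of initial data and drivers they depend on that set and on the two uniform drift bounds in \eqref{ass:first_on_b} only, never on $u$ itself.

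To obtain the joint continuity of $\Gamma_{b,\sigma}$, fix $(y_0^{(n)},\bx_n,u_n)\to(y_0,\bx,u)$ in $\R^m\times\rpsz(\R^d)\times L^0(\d t;\mathcal U)$ and split, writing $\Gamma$ for $\Gamma_{b,\sigma}$,
\begin{align*}
 d_{p-\mathrm{var}}\big(\Gamma(y_0^{(n)},\bx_n,u_n),\,\Gamma(y_0,\bx,u)\big)
 &\le d_{p-\mathrm{var}}\big(\Gamma(y_0^{(n)},\bx_n,u_n),\,\Gamma(y_0,\bx,u_n)\big)\\
 &\qquad + d_{p-\mathrm{var}}\big(\Gamma(y_0,\bx,u_n),\,\Gamma(y_0,\bx,u)\big).
\end{align*}
The first term tends to $0$ by the uniform-in-$u$ local Lipschitz estimate of the previous step, since $(y_0^{(n)},\bx_n)\to(y_0,\bx)$ stays bounded and the modulus of continuity is insensitive to $u_n$. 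For the second term, keep $y_0,\bx$ fixed and vary only the control. Setting $M:=\sup_a\Vert b(\cdot,a)\Vert_\infty$ and letting $C$ denote the constant in \eqref{ass:on_b}, boundedness of $b$ together with \eqref{ass:on_b} yields $\sup_{y\in\R^m}|b(y,u_n(s))-b(y,u(s))|\le\min\{C\Vert u_n(s)-u(s)\Vert,\,2M\}$, so that convergence $u_n\to u$ in Lebesgue measure gives
\begin{align*}
 \delta_n\;:=\;\int_0^T\sup_{y\in\R^m}\big|b(y,u_n(s))-b(y,u(s))\big|\,\d s\;\xrightarrow[n\to\infty]{}\;0.
\end{align*}
Because the drifts $(t,y)\mapsto b(y,u_n(t))$ obey the bounds of \eqref{ass:first_on_b} uniformly in $n$, the solutions $\Gamma(y_0,\bx,u_n)$ enjoy uniform a priori bounds, and the standard estimate comparing two RDE solutions that share the same $\bx$ and $\sigma$ --- the driftless rough-path difference estimate, together with a Gronwall term from $C\int_0^t|y_s^{u_n}-y_s^{u}|\,\d s$ and the residual $\delta_n$ --- bounds $d_{p-\mathrm{var}}\big(\Gamma(y_0,\bx,u_n),\Gamma(y_0,\bx,u)\big)$ by a constant multiple of $\delta_n$, hence it vanishes. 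This establishes the continuity of $\Gamma_{b,\sigma}$.

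The step I expect to be the main obstacle is exactly this last one: upgrading convergence of controls \emph{in Lebesgue measure} --- the natural but weak topology on $L^0(\d t;\mathcal U)$, with $\mathcal U$ possibly unbounded --- to convergence of the controlled rough paths. Both halves of the hypothesis on $b$ are indispensable here: boundedness tames the unbounded range of $\mathcal U$ and the uncontrolled behaviour of $u_n-u$ on the small-measure sets where it is large, while the Lipschitz-in-$a$ bound \eqref{ass:on_b} is what converts measure-convergence into genuine $L^1$-in-time smallness $\delta_n\to 0$ of the drift increments. It is also the point at which the ``merely measurable in $t$'' nature of the drift must be reconciled with the pathwise rough-path estimates --- possible precisely because the drift acts through a Lebesgue integral and not through $\bx$ --- and, more quantitatively, where one must check that none of the constants in the Itô--Lyons-type estimates secretly depend on the regularity of $u$.
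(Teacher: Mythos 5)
Your proposal takes a genuinely different route from the paper's proof: the paper uses a \emph{flow decomposition}, first solving the driftless RDE $\phi(t,y_0)=y_0+\int_0^t\sigma(\phi(s,y_0))\,\d\bx(s)$ to obtain a $\mathcal C^2$-flow, then converting \eqref{eq:rde_with_drift} into the Carath\'eodory ODE \eqref{eqn:ODE_flow} for $Z^{y_0}$, and finally setting $y(t)=\phi(t,Z^{y_0}_t)$. The control then acts \emph{only} through an ordinary ODE with a merely measurable-in-$t$ right-hand side, for which classical Carath\'eodory/Gronwall tools and your $L^1$-in-time smallness $\delta_n\to 0$ apply directly. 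This is precisely what makes the estimates ``standard'' in the paper's sense, and it is also the origin of the hypothesis $\|\sigma\|_{\Lip^{\gamma+1}}<\infty$: the ODE \eqref{eqn:ODE_flow} involves $(D_z\phi)^{-1}$, and bounding its Lipschitz constant requires control on $D^2\phi$, which costs one derivative of $\sigma$. Your explanation of the extra degree (``lets the rough perturbation and the drift be controlled together'') does not match this and is, in fact, slightly off: the paper's Remark following the theorem points out that the direct (non-flow) approach of \cite[Theorem 29]{diehl2017stochastic} manages with only $\Lip^\gamma$; the $+1$ is a by-product of the flow method, not a feature of any direct estimate.

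The serious gap is in your comparison step. You invoke ``the standard estimate comparing two RDE solutions that share the same $\bx$ and $\sigma$ --- the driftless rough-path difference estimate, together with a Gronwall term from $C\int_0^t|y^{u_n}_s-y^u_s|\,\d s$ and the residual $\delta_n$'' to bound $d_{p-\text{var}}(\Gamma(y_0,\bx,u_n),\Gamma(y_0,\bx,u))$ by a multiple of $\delta_n$. No such standard estimate is available in the $p$-variation framework for general $p$ with a drift that is merely measurable in time: mixing a Gronwall argument (which lives in sup-norm) with $p$-variation distances of RDE solutions is exactly the technical obstruction, since the rough integral does not interact with such pointwise bounds in a purely additive way. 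The paper's own remark makes this explicit, stating that avoiding the flow decomposition ``relies on a yet to be developed solution theory for rough differential equations on Banach spaces containing a drift parameter that is only Lipschitz continuous''. So the step you flag at the end as ``the main obstacle'' is indeed the gap in your argument, and it cannot be closed by citing existing results; the flow decomposition is what the paper uses precisely to sidestep it, at the cost of one extra degree of regularity on $\sigma$. Your identification of the correct role of the two hypotheses \eqref{ass:first_on_b} and \eqref{ass:on_b} in turning measure convergence of $u_n$ into $L^1$-smallness of the drift increment is correct and coincides with what happens after the flow transformation, but it is deployed at a point in the argument where the needed a priori comparison estimate is not at your disposal.
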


\begin{proof}
    The proof uses a similar flow decomposition method  as in \cite{RS17}. 
    
    \noindent Let $\phi \colon [0,T] \times \R^m \to \R^m$ denote the solution to
    \begin{align}\label{eqn:RDE_flow}
        \phi(t,y_0) = y_0 + \int_0^t \sigma(\phi(s,y_0)) \, \d{\bx}(s).
    \end{align}
    From \cite[Proposition 11.11]{FV10}, we know that $\phi$ is a flow of $\mathcal{C}^2$-diffeomorphisms and that the first and second derivate of $\phi$ and its inverse are bounded by a constant depending only on $p,\gamma, \Vert \sigma \Vert_{\mathrm{Lip}^{\gamma + 1}}$ and the rough path norm of $\bx$.  Define
    \begin{align*}
        \mu(t,z) \coloneqq (D_z \phi(t,z))^{-1} b(\phi(t,z),u(t)).
    \end{align*}
    By our assumption on $b$ stated in \eqref{ass:first_on_b} and the properties of $\phi$ we summarized above, $z \mapsto \mu(t,z)$ is continuous for every $t$ and $t \mapsto \mu(t,z)$ is measurable for every $y$. Moreover,
    \begin{align*}
        |\mu(t,z)| \leq \sup_{s \in [0,T]} \| (D \phi(s,\cdot))^{-1}\|_{\infty} \sup_{a \in \mathcal{U}} \sup_{y \in \R^m} |b(y,a)| < \infty
    \end{align*}
    for every $(t,z) \in [0,T] \times \R^m$. Since the inverse of the second derivative of $\phi$ is bounded as well, there exists a constant $C$ such that
    \begin{align*}
        \sup_{t \in [0,T]} |\mu(t,z_1) - \mu(t,z_2)| \leq C |z_1 - z_2|
    \end{align*}
    for every $z_1, z_2 \in \R^m$. Therefore, by Carath\'eodory's theorem, we can conclude that the ordinary differential equation
    \begin{align}\label{eqn:ODE_flow}
        Z^{y_0}_t = y_0 + \int_0^t \mu(s, Z^{y_0}_s) \, \d s
    \end{align}
    has a unique solution $Z^{y_0}$ for every initial condition $y_0$. Define 
    \begin{align*}
        y(t) \coloneqq y^{y_0,\mathbf{x}}(t) \coloneqq \phi(t, Z_t^{y_0}).
    \end{align*}
    
    Now let $\tilde{\bx}$ be another rough path and $\tilde{y}_0$ an initial condition. Choose $\kappa >0$ such that $\kappa \geq d_{p-\text{var}}(\mathbf{1},\bx) \vee d_{p-\text{var}}(\mathbf{1},\tilde{\bx})$. The difference of the corresponding ODE solutions \eqref{eqn:ODE_flow} can be estimated in terms of the difference of the respective flows defined in \eqref{eqn:RDE_flow}. In \cite[Theorem 11.12]{FV10}, it is proven that the flow induced by a rough differential equation is continuous with respect to the driving rough path. In fact, viewing the flow as a solution to a standard, high-dimensional rough differential equation, the local Lipschitz continuity of the It\^o-Lyons map (\cite[Theorem 10.26]{FV10}) can be used to strengthen this result, showing that the flow is even locally Lipschitz continuous with respect to the driving rough path (see the proof of \cite[Corollary 1]{FR11} where this argument is spelled out in more detail). Using this, we can prove an estimate of the form
    \begin{align}\label{eqn:RDE_Lipschitz}
        d_{p-\text{var}}(y^{y_0,\mathbf{x}},\tilde{y}^{\tilde{y}_0,\tilde{\mathbf{x}}}) \leq C(|y_0 - \tilde{y}_0| + d_{p-\text{var}}(\bx,\tilde{\bx}))
    \end{align}
    where the constant $C$ depends on $\kappa$, but not necessarily on the control $u$ due to the fact that the Lipschitz norm of $b$ is uniformly bounded as assumed in \eqref{ass:first_on_b}. 
    
    Note that if $\mathbf{x} = \Sig{x}^{\le \integer{p}}$ for a smooth path $x$, the classical chain rule implies that $y$ solves
    \begin{align*}
        \d{y}(t) = b(y(t), u(t)) \, \d{t} + \sigma(y(t)) \dot{x}(t) \, \d t,\quad t \in [0,T], \qquad y(0) = y_0\in\R^m.
    \end{align*}
    Set $\Gamma_{b,\sigma}(y_0;x) \coloneqq y$. To prove that $y$ solves \eqref{eq:rde_with_drift} in the rough case, too, we choose a sequence of smooth paths $(x_n)_{n\ge1}$ such that 
    \begin{align*}
        \lim_{n\to\infty} d_{p-\mathrm{var}}\big(\bx,\, \Sig{x_n}^{\le \integer{p}}\big) = 0.
    \end{align*}
    The estimate \eqref{eqn:RDE_Lipschitz} implies that indeed 
    \begin{align*}
        \Gamma_{b,\sigma}(y_0;x_n) \to y
    \end{align*}
    as $n \to \infty$ uniformly on $[0,T]$ which proves that $y$ is the unique solution to \eqref{eq:rde_with_drift}. Furthermore, the bound \eqref{eqn:RDE_Lipschitz} implies that 
    \begin{align*}
        (y_0,\mathbf{x}) \mapsto y
    \end{align*}
    is locally Lipschitz continuous uniformly in $u$.

    The rough path lift $\by$ of $y$ is constructed similarly. The only thing that changes is that we have to replace $\phi$ by the flow $\Phi$ that is induced by the full rough differential equation
    \begin{align*}
        \d \mathbf{z}(t) = \sigma(\mathbf{z}(t)) \, \d{\bx}(t).
    \end{align*}
    In fact, it is shown in \cite[Theorem 10.35]{FV10} that solutions to full rough differential equations are solutions to ordinary rough differential equations driven by a different vector field, thus the flow $\Phi$ satisfies exactly the same properties as above. We can hence deduce that
        \begin{align*}
            d_{p-\text{var}}(\by^{y_0,\mathbf{x}},\tilde{\by}^{\tilde{y}_0,\tilde{\mathbf{x}}}) \leq C(|y_0 - \tilde{y}_0| + d_{p-\text{var}}(\bx,\tilde{\bx}))
        \end{align*}
        which proves that $\by$ is indeed the rough path lift of $y$ and that 
        \begin{align*}
            (y_0,\mathbf{x}) \mapsto \by
        \end{align*}
    is also locally Lipschitz continuous uniformly in $u$.
    
    It remains to prove that 
    \begin{align*}
        (y_0, \bx, u) \mapsto  \by
    \end{align*}
    is continuous under assumption \eqref{ass:on_b}. We will argue why the map is continuous in the control $u$ alone first. We fix a rough path $\bx$, an initial condition $y_0$ and consider the corresponding flow $\phi$ defined in \eqref{eqn:RDE_flow}. If $u$ and $\tilde{u}$ are two controls, we define
    \begin{align*}
        \mu(t,z) \coloneqq (D_z \phi(t,z))^{-1} b(\phi(t,z),u(t))
    \end{align*}
    resp.
    \begin{align*}
        \tilde{\mu}(t,z) \coloneqq (D_z \phi(t,z))^{-1} b(\phi(t,z), \tilde{u}(t)).
    \end{align*}
          
    Our assumptions from \eqref{ass:on_b} and the already mentioned properties of the flow implies in particular that for $z_1, z_2 \in \R^m$, we have 
    \begin{align*}
        |\mu(t,z_1) - \tilde{\mu}(t, z_2)| 
        &\le c(|z_1 - z_2| + \min\{\vert u(t) - \tilde{u}(t)\vert, C\}), \qquad t\in[0,T],
    \end{align*}
    for appropriate constants $c,C >0$. One then concludes with Gr\"onwall's inequality that
    the solution of
    \begin{align*}
        \tilde{Z}^{y_0}_t = y_0 + \int_0^t \tilde{\mu}(s, \tilde{Z}^{y_0}_s) \, \d s
    \end{align*}
    converges uniformly to the solution $Z^{y_0}$ of \eqref{eqn:ODE_flow}
    as $\tilde{u} \to u$ with respect to Lebesgue-measure.
    From this, we can conclude that $(y_0, \bx, u) \mapsto  y$
    is continuous in $u$. With the same arguments as before, this statement can easily be generalized to the map  $(y_0, \bx, u) \mapsto  \by$ and, eventually, it can be proven that continuity holds in all parameters using the triangle inequality and the continuity results that we have already proven.   
\end{proof}

\begin{remark}
    In \cite[Theorem 29]{diehl2017stochastic}, a similar statement is formulated in the case of $p \in [2,3)$ under the (slightly) weaker assumption  $\Vert \sigma \Vert_{\mathrm{Lip}^{\gamma}} < \infty$. The reason why we need one additional degree of smoothness here is that we use the flow decomposition method for which also the second derivative of the flow $\phi$ plays a role in the estimates to ensure global existence and uniqueness of \eqref{eqn:ODE_flow}. The proof of \cite[Theorem 29]{diehl2017stochastic} avoids the flow decomposition, but relies on a yet to be developed solution theory for rough differential equations on Banach spaces containing a drift parameter that is only Lipschitz continuous. With this it should be possible to get a version of Theorem \ref{thm:diehl_et_al} that only assumes $\Vert \sigma \Vert_{\mathrm{Lip}^{\gamma}} < \infty$.
\end{remark}

\subsection{Approximation of admissible controls by non-anticipative continuous path functions}\label{sec:progressive_approx}
Throughout this section we will assume that Assumption~\eqref{ass:filtration} holds, i.e., $\X$ takes values in $\rpsz$ and generates the underlying filtration $(\F_t)$.

\begin{proposition}\label{prop:optional_representation}
 For any $(\mathcal{F}_t)$-progressively measurable process $U \colon [0,T] \times \Omega \to \R^k$ there exists a Borel measurable map $\theta \colon \Lambda_T \to \R^k$ such that for
 \begin{align}\label{eq:U-rep}
    \theta(\X(\omega) |_{[0,t]}) = U_t(\omega),
 \end{align}
for $\mathrm{Leb}\otimes\P$-a.e.~$(t,\omega)\in[0,T]\times\Omega$.
 Conversely, for any Borel measurable $\theta \colon \Lambda_T^p \to \R^k$,
 \begin{align}\label{eq:U-theta}
     U^\theta_t : = \theta(\X |_{[0,t]}), \qquad t\in[0,T],
 \end{align}
 defines an $(\F_t)$-progressively measurbale process.
\end{proposition}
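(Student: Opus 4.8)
The plan is to prove the easy converse implication first and then the representation~\eqref{eq:U-rep}. For the converse, given a Borel map $\theta\colon\Lambda_T\to\R^k$, I would fix $s\in[0,T]$ and show that $F\colon[0,s]\times\Omega\to\Lambda_T$, $(t,\omega)\mapsto\X(\omega)|_{[0,t]}$, is $\mathcal B([0,s])\otimes\mathcal F_s$-measurable; then $U^\theta$ restricted to $[0,s]\times\Omega$ equals $\theta\circ F$ and inherits this measurability, which is exactly progressive measurability. For each fixed $t\le s$ the map $\omega\mapsto\X(\omega)|_{[0,t]}\in\Omega^{p,0}_t\hookrightarrow\Lambda_T$ is $\sigma(\X_r\colon r\le t)$-measurable, hence $\mathcal F_s$-measurable, because the Borel $\sigma$-algebra of the Polish space $\Omega^{p,0}_t$ is generated by the continuous evaluations $\bx\mapsto\bx_r$, $r\le t$, and the inclusion $\Omega^{p,0}_t\hookrightarrow\Lambda_T$ is continuous. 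For each fixed $\omega$ the map $t\mapsto\X(\omega)|_{[0,t]}$ is continuous for the metric of $\Lambda_T$: for $t<t'$ its distance is $d_{p-\mathrm{var}}(\X(\omega)(\cdot\wedge t),\X(\omega)(\cdot\wedge t'))+|t'-t|$ and the first term is dominated by the $p$-variation of $\X(\omega)$ over $[t,t']$, which tends to $0$ as $t'\to t$. A map into a metric space that is separately continuous in $t$ and measurable in $\omega$ is jointly measurable (freeze the time coordinate on a dyadic grid to approximate $F$ pointwise by $\mathcal B([0,s])\otimes\mathcal F_s$-measurable maps), and this settles the converse.

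For the representation I would first make two reductions. Composing componentwise with $\arctan$ reduces to a bounded control $U$ (if $\eta$ represents $\arctan\circ U$, then $\theta:=\tan\circ\eta$ on the Borel set $\{\eta\in(-\pi/2,\pi/2)^k\}$ and $\theta:=0$ elsewhere represents $U$). Then Lebesgue's differentiation theorem, applied for $\P$-a.e.\ fixed $\omega$, shows that the left-continuous adapted --- hence predictable --- process $\tilde U_t:=\liminf_n n\int_{(t-1/n)\vee 0}^t U_r\,\d{r}$ (adaptedness of the integrals uses progressive measurability of $U$) agrees with $U$ for $\mathrm{Leb}\otimes\P$-a.e.\ $(t,\omega)$, so it suffices to represent bounded predictable processes. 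For this I would invoke the functional monotone class theorem with $\mathcal H$ the set of bounded measurable $V\colon[0,T]\times\Omega\to\R$ admitting a Borel $\theta_V\colon\Lambda_T\to\R$ with $\theta_V(\X(\omega)|_{[0,t]})=V_t(\omega)$ for $\mathrm{Leb}\otimes\P$-a.e.\ $(t,\omega)$: this $\mathcal H$ is a vector space, contains the constants, and is closed under bounded monotone pointwise limits (for $V^{(n)}\uparrow V$ with representatives $\theta_n$, take $\theta_V:=\lim_n\theta_n$ on the Borel set where the limit exists and is finite, $\theta_V:=0$ elsewhere). The crucial point is that $\mathcal H$ contains the multiplicative generating class $\mathcal M:=\{1\}\cup\{g\,\mathbbm{1}_{(u,v]}:0\le u<v\le T,\ g\ \text{bounded }\mathcal F_u\text{-measurable}\}$ of the predictable $\sigma$-algebra: by Assumption~\eqref{ass:filtration} such a $g$ is measurable for the $\P$-completion of $\sigma(\X|_{[0,u]})$, hence $\P$-a.s.\ equals $\hat g(\X|_{[0,u]})$ for some Borel $\hat g\colon\Omega^{p,0}_u\to\R$ (Doob--Dynkin, using that $\Omega^{p,0}_u$ is Polish); since the restriction map $\rho_u\colon\{\by^s\in\Lambda_T:s>u\}\to\Omega^{p,0}_u$ is continuous, the map $\theta(\by^s):=\hat g(\rho_u(\by^s))$ for $s\in(u,v]$ and $\theta:=0$ otherwise is Borel on $\Lambda_T$ and satisfies $\theta(\X(\omega)|_{[0,t]})=g(\omega)\,\mathbbm{1}_{(u,v]}(t)$ for $\mathrm{Leb}\otimes\P$-a.e.\ $(t,\omega)$. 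The monotone class theorem then yields $\tilde U\in\mathcal H$, and unwinding the two reductions gives the required $\theta$ for $U$.

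I expect the main obstacle to be producing a \emph{single} Borel functional on the union space $\Lambda_T=\bigcup_t\Omega^{p,0}_t$ out of the fibrewise Doob--Dynkin factorizations of the $U_t$: gluing countably many such representations consistently in $t$ by hand would run into measurable-selection issues, which is precisely why the argument is routed through elementary processes and the monotone class theorem. The remaining ingredients --- continuity of the restriction and constant-extension maps on stopped-rough-path space, Polishness of $\Lambda_T$ and of the $\Omega^{p,0}_t$ (cf.\ Proposition~\ref{prop:lambda_topology}), and the bookkeeping of $\mathrm{Leb}\otimes\P$-null sets --- are routine once the right statements are isolated.
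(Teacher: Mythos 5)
Your converse argument is essentially the paper's: the paper simply observes that $(s,\omega)\mapsto(s,\X(\omega)|_{[0,t]})$ is $\mathcal B([0,t])\otimes\mathcal F_t$-measurable and composes with the continuous map $\varphi$ of Proposition~\ref{prop:lambda_topology}, whereas you re-derive joint measurability from separate continuity in $t$ and measurability in $\omega$ --- same mechanism, slightly longer route.

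For the representation direction, however, your proof is genuinely different. The paper factorizes directly: using Lemma~\ref{lem:borel_algebra} it first produces a jointly measurable $\eta\colon[0,T]\times\rpsz\to\R^k$ with $U=\eta(\cdot,\X)$, then for each fixed $t$ replaces $\eta(t,\X)$ by its $\F_t$-measurable version and defines $\theta(\bx^t):=\eta(t,\bx^t(\cdot\wedge t))$, whose Borel measurability follows from the continuity of the constant-extension map $\Lambda_T\to[0,T]\times\rpsz$. You instead reduce to \emph{predictable} processes (via the arctan truncation and the Lebesgue-differentiation mollification) and then run a functional monotone class argument over the generating class $\{g\,\mathbbm 1_{(u,v]}\}$, applying Doob--Dynkin only to the $\F_u$-measurable factor $g$ and exploiting continuity of the restriction map $\rho_u$ on $\{\by^s:s>u\}$. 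Your route is longer but makes completely explicit how a single Borel functional on the union space $\Lambda_T$ is assembled --- exactly the gluing point that the paper's phrase ``this readily implies that almost surely $\eta(t,\X)=\eta(t,\X_{\cdot\wedge t})$'' compresses, and which you correctly identify as the crux. Both approaches ultimately rest on the same two ingredients (the identification of $\F_t$ with the completion of $\rho_t^{-1}(\mathcal B(\rpszt))$, and the continuity of the extension/restriction maps), and both produce a.e.\ representations, so the conclusions match. One small slip on your side: $\tilde U_t=\liminf_n n\int_{(t-1/n)\vee 0}^t U_r\,\d r$ need not be \emph{left-continuous} (a $\liminf$ of continuous functions is merely upper semi-continuous); but the conclusion you actually need --- predictability of $\tilde U$ --- still holds, since each $t\mapsto n\int_{(t-1/n)\vee 0}^t U_r\,\d r$ is continuous and adapted, hence predictable, and predictability is preserved under $\liminf$. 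I would rephrase that step but the proof stands.
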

\begin{proof}
Let $U$ be a $(\F_t)$-progressively measurable process.
Then $U: [0,T] \times \Omega \to \R^{k}$ is measurable with respect to $\mathcal{B}([0,T])\otimes \F_T$.
From Lemma~\ref{lem:borel_algebra} we have that $\F_T$ is the completion of $\X^{-1}(\mathcal{B}(\rpsz))$.
Hence, there exists a measurable map $\eta: [0,T]\times\rpsz \to \R^{k}$ such that $U = \eta(\cdot, \X)$ upto indistinguishability.

Now fix an arbitrary $t\in[0,T]$.
Since $U_t$ is $\F_t$-measurable it follows again by Lemma~\ref{lem:borel_algebra} that there exists a map $\eta_t: \rpszt \to \R^{k}$ such that almost surely
$\eta(t, \X) = U_t = \eta_t(\X\vert_{[0,t]})$. 
This readily implies that almost surely $\eta(t, \X)  = \eta(t, \X_{\cdot\wedge t})$. Finally we define $\theta: \Lambda_T^p \to \R^{k}$ by setting $\theta(\X\vert_{[0,t]}) := \eta(t, \X_{\cdot\wedge t})$ which satisfies \eqref{eq:U-rep}. From the continuity of the extension map $\Lambda_T \to [0,T] \times \rpsz$, $(t,\bx^t) \coloneqq (t, \bx^t(\cdot \wedge t))$
 (see Section~\ref{sec:stopped_rp}) we also have that $\theta$ is measurable.

Conversely, if $U$ is a process satisfying \eqref{eq:U-theta} for some Borel-measurable map ${\theta: \Lambda_T \to \R^k}$, then for any $t\in[0,T]$ and $B \in \mathcal{B}(\R^k)$ we have
\begin{align*}
    \{ (s,\omega) \in [0, t]\times \Omega \;\vert\; U_s(\omega) \in B\} &= \{ (s,\omega) \in [0, t]\times \Omega \;\vert\; \X(\omega)\vert_{[0,s]} \in \theta^{-1}(B)\} \\
    &= \{ (s,\omega) \in [0, t]\times \Omega \;\vert\; \varphi(s, \X(\omega)\vert_{[0,t]}) \in \theta^{-1}(B)\},
\end{align*}
where $\varphi: [0,t] \times \rpszt \to \Lambda_t: (s, \bx) \mapsto \bx\vert_{[0,s]}$. 
By Proposition~\ref{prop:lambda_topology} the map $\varphi$ is continuous.
Since $\omega \mapsto \X(\omega)\vert_{[0,t]}$ is $\F_t$ measurable it then follows that the above set is in $\mathcal{B}([0,t]) \otimes \mathcal{F}_t$.
\end{proof}

Based on the above result, we are next going to show that admissible controls can be approximated with \emph{continuous} non-anticipative path functionals, i.e., with controls in the following subclass
\begin{align*}
    \mathcal{A}_{c} := \{ U^{\theta} \; \vert\; \theta: \Lambda_T^p \to \R \;\text{ continuous.}\}
\end{align*}

\begin{corollary}\label{cor:continuous_approximation}
For any admissible control $U \in \mathcal{A}$ there exists a sequence $(U^n)$ in $
\mathcal{A}_{c}$ such that
\begin{align}\label{eq:convergence_of_control}
U = \lim_{n\to\infty}U^{n} \qquad \mathrm{Leb}\otimes\P\text{-almost everywhere.}
\end{align}
\end{corollary}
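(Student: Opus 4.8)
The plan is to combine the measurable-functional representation from Proposition~\ref{prop:optional_representation} with a standard approximation of Borel functions by continuous functions on a Polish space, carried out in the right topology. By Proposition~\ref{prop:optional_representation}, for $U \in \mathcal{A}$ there is a Borel map $\theta\colon \Lambda_T \to \R^k$ with $\theta(\X|_{[0,t]}) = U_t$ for $\mathrm{Leb}\otimes\P$-a.e.\ $(t,\omega)$; moreover $\theta$ takes values in $\mathcal{U}$ a.e.\ along the image of the path. The strategy is then to approximate $\theta$ itself by continuous maps $\theta^n\colon \Lambda_T \to \mathcal{U}$ in a mode of convergence strong enough to push through to $\mathrm{Leb}\otimes\P$-a.e.\ convergence of $U^{\theta^n}$ to $U^\theta$.

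The first step is to introduce the image measure $\mu$ on $\Lambda_T$ defined by $\mu := (\mathrm{Leb}\otimes\P)\circ \Psi^{-1}$, where $\Psi\colon [0,T]\times\Omega \to \Lambda_T$ is $\Psi(t,\omega) := \X(\omega)|_{[0,t]}$ (measurable by Proposition~\ref{prop:lambda_topology} and the fact that $\X$ is $\rpsz$-valued). Since $\Lambda_T$ is Polish (Proposition~\ref{prop:lambda_topology}) and $\mu$ is a finite Borel measure on it, $\mu$ is automatically a Radon measure, so Lusin's theorem applies: the Borel map $\theta$ agrees with a continuous map off a set of arbitrarily small $\mu$-measure. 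Second, I would upgrade this to $L^p(\mu)$-convergence: since $\mathcal{U}$ is closed, convex, and nonempty, one can truncate and apply a Tietze-type extension together with a metric projection onto $\mathcal{U}$ to obtain continuous $\theta^n\colon \Lambda_T \to \mathcal{U}$ with $\theta^n \to \theta$ in $\mu$-measure (and, passing to a subsequence, $\mu$-a.e.). Concretely: pick by Lusin closed sets $K_n \subset \Lambda_T$ with $\mu(\Lambda_T\setminus K_n) \le 2^{-n}$ on which $\theta$ is continuous, extend $\theta|_{K_n}$ continuously to all of $\Lambda_T$ by Tietze (componentwise, since $\Lambda_T$ is metric hence normal), and compose with the nearest-point projection $P_{\mathcal{U}}$ onto $\mathcal{U}$, which is well-defined, continuous, and a retraction because $\mathcal{U}$ is closed and convex; set $\theta^n := P_{\mathcal{U}} \circ (\text{extension})$. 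On $K_n$ one has $P_{\mathcal{U}}(\theta) = \theta$ (as $\theta\in\mathcal{U}$ $\mu$-a.e.), so $\theta^n = \theta$ on $K_n$ up to the null set, whence $\theta^n \to \theta$ $\mu$-a.e.\ by Borel–Cantelli.

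The third step is to transfer the convergence back. Define $U^n := U^{\theta^n} \in \mathcal{A}_c$ via \eqref{eq:U-theta}; these are admissible since they are $\mathcal{U}$-valued progressively measurable by Proposition~\ref{prop:optional_representation}. Then for $\mathrm{Leb}\otimes\P$-a.e.\ $(t,\omega)$,
\begin{align*}
    U^n_t(\omega) = \theta^n(\Psi(t,\omega)) \xrightarrow[n\to\infty]{} \theta(\Psi(t,\omega)) = U_t(\omega),
\end{align*}
because $\{\theta^n \to \theta\}$ has full $\mu$-measure and $\mu = (\mathrm{Leb}\otimes\P)\circ\Psi^{-1}$, so its preimage under $\Psi$ has full $\mathrm{Leb}\otimes\P$-measure; the exceptional null set from Proposition~\ref{prop:optional_representation} is absorbed. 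This is exactly \eqref{eq:convergence_of_control}.

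The main obstacle is the middle step: producing \emph{continuous} approximants that honour the constraint $\theta^n(\Lambda_T)\subset\mathcal{U}$ while still converging a.e. Lusin's theorem alone gives continuity only on a closed subset, and a naive Tietze extension need not land in $\mathcal{U}$; the fix is the composition with the metric projection $P_{\mathcal{U}}$, which is where convexity of $\mathcal{U}$ is essential (it guarantees $P_{\mathcal{U}}$ is single-valued and $1$-Lipschitz). A secondary technical point is ensuring the image measure $\mu$ is genuinely Radon so that Lusin applies — but this is immediate since finite Borel measures on Polish spaces are Radon. Everything else (measurability of $\Psi$, that $U^{\theta^n}\in\mathcal{A}_c$, the change-of-variables identity for $\mu$) is routine given the results already established in the excerpt.
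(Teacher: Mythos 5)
Your proof is correct and takes essentially the same route as the paper's: push the measure $\mathrm{Leb}\otimes\P$ forward to $\Lambda_T$, approximate the Borel representative $\theta$ from Proposition~\ref{prop:optional_representation} by continuous maps $\mu$-a.e., and then compose with the metric projection $P_{\mathcal{U}}$ to land in the constraint set. The only difference is that where the paper simply cites a reference for the fact that Borel maps into $\R^k$ can be approximated $\mu$-a.e.\ by continuous ones on a Polish space with a finite Borel measure, you reconstruct that fact explicitly via Lusin's theorem, Tietze extension, and Borel--Cantelli -- which is exactly the standard proof of the cited result.
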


We denote by $P_\mathcal{U}: \R^k \to \mathcal{U}$ the projection map onto the closed convex set $\mathcal{U}$.
Note that by the Hilbert projection theorem (c.f. \cite[Theorem 4.10]{rudin1987complex}) this map is unique and continuous.

\begin{proof}
The continuity of $\varphi: [0,T] \times \rpsz \to \Lambda^p_T:$, $(t, \bx) \mapsto \bx\vert_{[0,t]}$ implies that the push forward $\mu = \varphi(\cdot, \X(\cdot))_\ast[\mathrm{Leb}\otimes\P]$ is a well-defined probability measure on $(\Lambda^p_T, \mathcal{B}(\Lambda^p_T))$. From \cite[p. 148]{Wis94} it follows that every Borel measurable map ${\theta \colon \Lambda^p_T \to \R^k}$ can be approximated $\mu$-almost surely by continuous maps.

Now let $U \in \mathcal{A}$. By Proposition~\ref{prop:optional_representation} there exits a Borel measurable $\theta\colon \Lambda^p_T \to \R^k$ such that \eqref{eq:U-rep} holds for $\textrm{Leb}\otimes \P$-a.e.  $(t,\omega)$.
From the previous paragraph it follows that there exists a sequence of continuous maps $\theta_n: \Lambda^p_T \to \R^k$ for $n\ge1$ that converges $(\mathrm{Leb}\otimes\P)$-almost everywhere towards $\theta$.
This implies that 
$$ U_t(\omega) = \theta(\X\vert_{[0,t]}(\omega)) = \lim_{n\to\infty} \theta_n(\X\vert_{[0,t]}(\omega)),
$$
for $\mathrm{Leb}\otimes\P$-a.e.~$(t,\omega)\in[0,T]\times\Omega$. Finally, composing with the projection map $U^n :=  P_\mathcal{U}\circ\widetilde{\theta}_{n} \circ \X$ and recalling that $P_\mathcal{U}$ is continuous, the claim follows.
\end{proof}

\subsection{Linear and deep signature controls}\label{sec:signature_strategies}

The universal approximation theorem for signatures allows us to approximate any continuous functional on the path space by linear functionals of the signature. 
This motivates us to consider the following class of \emph{linear signature functionals}
\begin{multline*}
\Tsig := \Big\{ \theta: \Lambda^T \to \R^k \;\Big\vert\; \exists \ell_1, \dots, \ell_k \in T((\R^{d+1})^{\ast})
\text{ s.t. } \theta^i(\hat{\bx}^t) = \langle \ell_i, \Sig{\hat{\bx}^{t}}_{0,t}\rangle\;\; \forall\;\bx^{t}\in\Lambda^p_T \Big\}.
\end{multline*}
In general such functionals $\theta \in \Tsig$ will not lead to admissible strategies $U^{\theta}$.
Recall that $P_\mathcal{U}$ is the projection from $\R^k$ onto the convex set $\mathcal{U}$.
We then define the set of \emph{linear signature controls}
\begin{align*}
\Asig := \Big\{ U^{\theta} \;\Big\vert\;  \exists \theta^{\prime} \in \Tsig \text{ s.t. }\theta = P_\mathcal{U} \circ \theta^{\prime} \Big\}.
\end{align*}
We will argue below that $\Tsig \subset C(\Lambda^p_{T}; \R^k)$, thus verifying that indeed $\Asig \subset \mathcal{A}_{c} \subset \mathcal{A}$.
Before doing so we will, however, introduce a second class of signature strategies, based on non-linear functionals.
Motivated by the numerical efficiency for optimal stopping problems in \cite{BHRS23}, we define the following class of deep neural functionals of the log signature
\begin{multline*}
\Tlog := \Big\{ \theta: \Lambda^T \to \R^k \;\Big\vert\;  \exists N\in\N,\; F \in \mathcal{D}^{\eta_{d+1,N}, k} \\ \text{ s.t. } \theta(\hat{\bx}^{t}) = F \circ \log_\otimes (\Sig{\hat{\bx}^{t}}^{\le N}_{0,t}) \;\; \forall\; \bx^{t}\in\Lambda^p_T \Big\},
\end{multline*}
where $\log_\otimes : G^{N}(\R^{d+1}) \to \mathfrak{g}^{N}(\R^{d+1}) \cong \R^{\eta_{d+1,N}}$ is the truncated tensor logarithm (see Section~\ref{sec:tensor}), $\eta_{d+1,N}$ is the dimension of the truncated log signature\footnote{We refer to \cite{BHRS23} Section 7 for more detail on the log-signature and the dimension of the step-$N$ nilpotent free Lie-algebra $\mathfrak{g}^{N}(\R^{d+1})$.
Note also that for simplicity we identify $\mathfrak{g}^N(\R^{d+1})$ with $\R^{\eta_{N}, k}$ in the definition of $\Tlog$ without formally introducing an isomorphism.} and $\mathcal{D}^{l,k}$ is the class of deep neural networks mapping from $\R^l$ to $\R^k$ with fixed depth $I\ge 1$, number of neurons $q \ge 1$ and activation function\footnote{An activation function is a continuous function that is not a polynomial. For example the ReLu function $\varphi(x) = \max\{0,x\}$ (componentwise).} $\varphi: \R^q \to \R^q$.
More precisely, $\mathcal{D}^{l,k}$ consists of functions of the form
\begin{align*}
 F = A_0 \circ \varphi \circ A_1 \circ \varphi \circ \cdots \circ A_I,
\end{align*}
where $A_I:\R^l \to \R^q$, $A_0:\R^q \to \R^k$ and $A_i:\R^q \to \R^q$ $(0 < i < I)$ are affine maps.
Similar to the linear case, we define the set of \textit{deep signature controls} by
\begin{align*}
\Alog := \left\{ U^{\theta} \;\Big\vert\;  \exists \theta^{\prime} \in \Tlog \text{ s.t. }\theta = P_\mathcal{U} \circ \theta^{\prime} \right\}.
\end{align*}
The following verifies that the signature controls are dense in the set of admissible strategies.
\begin{proposition}\label{prop:dens_sig_controls}
    It holds that $\Tsig, \Tlog \subset C(\Lambda^T; \R^k)$.
    Furthermore, given assumption \eqref{ass:filtration}, it holds that $\Asig, \Alog \subset \mathcal{A}_c$ and for any admissible control $U \in \mathcal{A}$ there exists a sequence $(U^n)_{n\ge1} \subset \Asig$ (respectively in  $\Alog$) such that
    \begin{align*}
 U = \lim_{n\to\infty}U^{n}, \qquad \mathrm{Leb}\otimes\P\text{-almost everywhere.}
\end{align*}
\end{proposition}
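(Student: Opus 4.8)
The statement has three parts: (i) continuity of the parametrized functionals, $\Tsig,\Tlog\subset C(\Lambda^p_T;\R^k)$; (ii) the inclusions $\Asig,\Alog\subset\mathcal A_c$; and (iii) the density of $\Asig$ (resp.\ $\Alog$) in $\mathcal A$ for the mode of convergence in \eqref{eq:convergence_of_control}. I would handle them in this order, since each builds on the previous one.

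For (i): a generic $\theta\in\Tsig$ has components $\theta^i(\bx^t)=\langle\ell_i,\Sig{\hat\bx^t}_{0,t}\rangle$ for fixed $\ell_i\in T((\R^{d+1})^\ast)$; since $\ell_i$ lies in the (algebraic) tensor algebra it only pairs with finitely many levels, so there is $N=N(i)$ with $\theta^i(\bx^t)=\langle\ell_i,\Sig{\hat\bx^t}^{\le N}_{0,t}\rangle$. Continuity then follows immediately from Lemma~\ref{lem:continuity_timesig}, which gives that $\bx^t\mapsto\Sig{\hat\bx^t}^{\le N}_{0,t}$ is continuous $\Lambda^p_T\to G^N(\R^{d+1})$, composed with the continuous linear map $\langle\ell_i,\cdot\rangle$. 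For $\theta\in\Tlog$ the same Lemma gives continuity of $\bx^t\mapsto\Sig{\hat\bx^t}^{\le N}_{0,t}$, and then one composes with the (smooth, hence continuous) truncated logarithm $\log_\otimes:G^N(\R^{d+1})\to\mathfrak g^N(\R^{d+1})$ from Section~\ref{sec:tensor} and with the continuous neural network $F\in\mathcal D^{\eta_{d+1,N},k}$ (a composition of linear maps and the continuous activation $\varphi$). For (ii): given $\theta=P_\mathcal U\circ\theta'$ with $\theta'\in\Tsig$ or $\Tlog$, continuity of $\theta'$ just proved and continuity of $P_\mathcal U$ (noted after Corollary~\ref{cor:continuous_approximation}) show $\theta\in C(\Lambda^p_T;\mathcal U)$, hence $U^\theta\in\mathcal A_c$; that $\mathcal A_c\subset\mathcal A$ was already established (Proposition~\ref{prop:optional_representation} makes $U^\theta$ progressively measurable, and $L$ is bounded by \eqref{ass:cost_functional}).

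For (iii), the density, I would argue by a two-step approximation. First, by Corollary~\ref{cor:continuous_approximation}, given $U\in\mathcal A$ there is a sequence $U^n=P_\mathcal U\circ\theta_n\circ\X|_{[0,\cdot]}$ with $\theta_n\in C(\Lambda^p_T;\R^k)$ (I take the unprojected continuous maps $\theta_n$ from that proof) converging to $U$ $\mathrm{Leb}\otimes\P$-a.e. Second, I would approximate each continuous $\theta_n$ by signature functionals: by Proposition~\ref{prop:universial_signature_stopped} applied componentwise on a suitable compact set, for each fixed $n$ and each $\varepsilon>0$ there is $\ell\in T((\R^{d+1})^\ast)^{\oplus k}$ (equivalently, a $k$-tuple of linear signature functionals) approximating $\theta_n$ uniformly on $\varphi([0,T]\times K)$ for compact $K\subset\rpsz$; choosing $K=K_m$ an exhausting sequence of compacts with $\P(\X\in K_m)\to1$ and diagonalizing over $n,m,\varepsilon$ yields a sequence $\widetilde\theta_j\in\Tsig$ with $P_\mathcal U\circ\widetilde\theta_j\circ\X|_{[0,\cdot]}\to U$ in $\mathrm{Leb}\otimes\P$-measure, hence along a subsequence $\mathrm{Leb}\otimes\P$-a.e.; the projection $P_\mathcal U$ is $1$-Lipschitz so it preserves the convergence. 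For $\Alog$, the argument is identical once one knows that the class $\{F\circ\log_\otimes(\Sig{\hat\bx^t}^{\le N}_{0,t})\}$ is dense in $C(\varphi([0,T]\times K);\R^k)$; this follows because $\log_\otimes$ is a homeomorphism onto its image $\mathfrak g^N$, so composing with it turns the universal approximation problem into the classical universal approximation theorem for neural networks $\mathcal D^{\eta_{d+1,N},k}$ on the compact image set (for which the non-polynomial activation hypothesis on $\varphi$ is exactly the standard Leshno--Lin--Pinkus--Schocken condition), combined with letting $N\to\infty$ to exhaust the linear-signature span.

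The main obstacle is the bookkeeping of the two nested limits in (iii): the measure-$\mu$-a.e.\ (equivalently $\mathrm{Leb}\otimes\P$-a.e.) approximation of the measurable $\theta$ by continuous $\theta_n$ is only on the \emph{full} path space, whereas the Stone--Weierstrass/universal-approximation step only gives \emph{uniform} approximation on compacts $K\subset\rpsz$; one must interleave these with an exhaustion $K_m\uparrow$ satisfying $\P(\X\notin K_m)\to0$ (possible since $\rpsz$ is Polish, so the law of $\X$ is tight) and extract a diagonal subsequence converging in $\mathrm{Leb}\otimes\P$-measure, then pass to a further subsequence for a.e.\ convergence. The only subtlety for $\Alog$ beyond this is checking that one genuinely recovers a dense subclass: the neural networks approximate any continuous function of the level-$N$ log-signature, and since the level-$N$ truncated signature (equivalently log-signature) of $\hat\bx^t$ already determines enough path information and the linear span over all $N$ is dense by Proposition~\ref{prop:universial_signature_stopped}, letting $N\to\infty$ closes the gap; I would state this as a short lemma rather than re-prove universal approximation.
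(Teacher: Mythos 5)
Your proof is correct and follows essentially the same route as the paper. Parts (i) and (ii) are handled identically: Lemma~\ref{lem:continuity_timesig} gives continuity of $\bx^t\mapsto\Sig{\hat\bx^t}^{\le N}_{0,t}$, which you post-compose with $\langle\ell,\cdot\rangle$ (or with $\log_\otimes$ and $F$), and continuity of $P_\mathcal{U}$ then gives the inclusion in $\mathcal{A}_c$. For (iii), both arguments reduce via Corollary~\ref{cor:continuous_approximation} to approximating continuous $\theta$ by signature functionals, both invoke tightness of the law of $\X$ on the Polish space $\rpsz$ to get an exhaustion by compacts, and both conclude with Proposition~\ref{prop:universial_signature_stopped} (resp.\ composition with $\log_\otimes$ and the Leshno--Lin--Pinkus--Schocken theorem for $\Alog$). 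The one cosmetic difference is in the diagonalization: you pass through convergence in $\mathrm{Leb}\otimes\P$-measure and then extract an a.e.\ convergent subsequence, whereas the paper constructs a single sequence $\theta_n$ with error at most $1/n$ on $K_n$ (so a.e.\ convergence is immediate from $\P(\X\in K_n)\to 1$ and $K_n$ increasing), absorbing the diagonal over the two sequences implicitly in the reduction step. Your remark that $P_\mathcal{U}$ is $1$-Lipschitz (rather than merely continuous) is indeed what one wants to transport convergence in measure, and your parenthetical acknowledgment that the $\Alog$ density step would be cleanest stated as a short lemma matches what the paper actually writes out: one checks that each linear functional $\langle\ell,\Sig{\hat\bx}_{0,T}\rangle$ at truncation level $N$ equals the continuous function $\mathbf{z}\mapsto\langle\ell,\exp_\otimes\mathbf{z}\rangle$ of the log-signature and is therefore uniformly approximable on compacts by networks in $\mathcal{D}^{\eta_{d+1,N},1}$.
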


Based on Corollary~\ref{cor:continuous_approximation} the proof is similar to the  Proposition~7.4 in \cite{BHRS23}.

\begin{proof}
Recall from Lemma~\ref{lem:continuity_timesig} that the map $\bx^t \mapsto \Sig{\hat\bx^t}^{\le N}_{0,T}$ is continuous for any $N\in\mathbb{N}$.
Since also the linear map $\langle \ell, \cdot \rangle: T((\R^{d+1})) \to \R$ is continuous for any $\ell \in T(\R^{d+1})$ we readily conclude that $\Tsig \subset C(\Lambda_T; \R^k)$. 
Similarly, since any $F\in\mathcal{D}^{\eta_N, k}$ is a continuous function and recalling from Section~\ref{sec:tensor} that also the truncated logarithm $\log_\otimes: G^N(\R^{d+1}) \to \mathfrak{g}^N(\R^{d+1})$ is continuous it follows similarly that $\Tlog \subset C(\Lambda_T; \R^k)$.
The fact that $\Asig, \Alog \subset \mathcal{A}_c$ then follows immediately by the continuity of the projection $P_\mathcal{U}$.

To prove the approximating result it suffices by Corollary~\ref{cor:continuous_approximation} to show that for any $\theta \in C(\Lambda_T;\R^k)$ there exits a sequence $(\theta_n)_{n\ge1} \subset \Tsig$ (respectively $\Tlog$), such that for a.e. $\omega\in\Omega$ it holds
\begin{align}\label{eq:_proof_approx}
     \theta(\X\vert_{[0,t]}(\omega)) = \lim_{n\to\infty} \theta_n(\X\vert_{[0,t]}(\omega)),\qquad \text{for }\mathrm{Leb}\text{-a.e. } t\in[0,T].
\end{align}
To this end, since $\rpsz$ is a Polish space, we can choose an increasing sequence of compact sets $K_n \subset \rpsz$ such that $\lim_{n\to\infty}\P(\X \in K_n) = 1$.
By Proposition~\ref{prop:universial_signature_stopped} for each $n\in\mathbb{N}$ there exits a sequence $(\ell_{n,j})_{j\ge1}\subset T(\R^{d+1})^{\otimes k}$ such that for $j_n \in \N$ large enough it holds 
$$ \sup_{t\in[0,T]} \sup_{\bx \in K_n} \max_{i = 1, \dots, k}\Big( \big\vert \theta^{i}(\bx\vert_{[0,t]}) - \langle \ell^{i}_{j}, \Sig{\hat{\bx}\vert_{[0,t]}} \rangle \big\vert \Big)  \le \frac{1}{n}, \qquad j \ge j_n.$$
Defining the sequence $(\theta_n)_{n\ge1}$ by $\theta^i_n(\bx^t) := \langle \ell^{i}_{j_n}, \Sig{\hat{\bx}^t}_{0,t}\rangle$ for all $i=1, \dots, k$, we readily see that it satisfies \eqref{eq:_proof_approx}.
To prove the approximation result for $\Alog$ it now suffices to show that $\Tlog$ is suitably dense in $\Tsig$.
More precisely, it suffices to show that for any compact set $K \subset \rpsz$, $N\in \mathbb{N}$ and  $\ell \in T^N(\R^{d+1})$ there exists a sequence of functions $(F^n)_{n\ge 1} \in \mathcal{D}^{\eta_{d+1,N},1}$ such that
\begin{align*}
    \lim_{n\to\infty}\sup_{\bx\in K}\vert \langle \ell, \Sig{\hat{\bx}}_{0,T}\rangle - F^n(\log_\otimes \Sig{\hat{\bx}}_{0,T})\vert = 0.
\end{align*}
Recall from Section~\ref{sec:tensor} that the map $\log_\otimes: G^N(\R^{d+1}) \to \mathfrak{g}^N(\R^{d+1})$ is continuous and invertable with inverse $\exp_{\otimes}$.
Hence, in particular the set $\{\log_\otimes \Sig{\hat{\bx}}_{0,T} \;\vert\; \bx \in K\} \subset \mathfrak{g}^N(\R^{d+1})$ is compact.
Since also the map $\mathfrak{g}^N(\R^{d+1}) \to \R: \mathbf{z} \mapsto \langle \ell, \exp_\otimes(\mathbf{z})\rangle$ is continuous, the statement now readily follows from the universal approximation theorem for nerual networks (see e.g. \cite[Theorem 1]{leshno1992multilayer}).
\end{proof}

\subsection{Approximation of the optimal costs}\label{sec:aprx_costs}

In this section we present our main theoretical result, which states that the optimal expected costs of the problem introduced in Section~\ref{sec:problem_description} can be approximated using the classes of signature controls $\Asig$ and $\Alog$.
We begin by proving the continuous dependence of the costs with respect to the underlying control.
Given the continuity of the cost functional $L$, this is a direct consequence of the stability result in Theorem~\ref{thm:diehl_et_al}.

\begin{lemma}\label{lem:continuity-Y-controlled}
Assume that \eqref{ass:cost_functional}, \eqref{ass:first_on_b} and \eqref{ass:on_b} hold and let $(U^{n})_{n\ge1}\subset \mathcal{A}$ and $U\in \mathcal{A}$ s.t.
\begin{align*}\
U = \lim_{n\to\infty} U^{n},\qquad \mathrm{Leb}\otimes\P\text{-almost everywhere.}
\end{align*}
Denote by $\Y^{n} = \Y^{U^{n}}$ respectively $\Y = \Y^{U}$ the solution to the rough differential equation \eqref{eq:main_srde} with the control $U^{n}$ respectively $U$.
Then it holds
\begin{align*}
L(\Y, U) = \lim_{n\to\infty}L(\Y^{n}, U^{n}), \qquad \text{almost surely}.
\end{align*}
\end{lemma}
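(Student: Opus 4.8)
The plan is to invoke the stability result Theorem~\ref{thm:diehl_et_al} together with the continuity and boundedness assumption~\eqref{ass:cost_functional} on the cost functional $L$, and then pass to the limit almost surely. The key point is that the hypothesis $U^n \to U$ holds $\mathrm{Leb}\otimes\P$-almost everywhere, which by Fubini means that for $\P$-a.e.~$\omega$ we have $U^n(\omega) \to U(\omega)$ for $\mathrm{Leb}$-a.e.~$t\in[0,T]$, i.e.~$U^n(\omega) \to U(\omega)$ in the topology of convergence in Lebesgue measure on $L^0(\d t;\mathcal{U})$ (pointwise $\mathrm{Leb}$-a.e.~convergence is stronger than convergence in measure). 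Fix such a typical $\omega$. Since $\X$ takes values in $\rpsz(\R^d)$ $\P$-a.s., we may substitute $\bx = \X(\omega)$ and $u = U^n(\omega)$, $u = U(\omega)$ into the controlled RDE~\eqref{eq:rde_with_drift}.

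First I would record that, under assumptions~\eqref{ass:first_on_b} and~\eqref{ass:on_b}, Theorem~\ref{thm:diehl_et_al} guarantees that the solution map $\Gamma_{b,\sigma}\colon \R^m\times\rpsz(\R^d)\times L^0(\d t;\mathcal{U}) \to \rps(\R^m)$ is continuous. Applying this at the fixed $\omega$ with the fixed first two arguments $y_0$ and $\X(\omega)$, and using $U^n(\omega)\to U(\omega)$ in $L^0(\d t;\mathcal{U})$, we get $\Y^n(\omega) = \Gamma_{b,\sigma}(y_0,\X(\omega),U^n(\omega)) \to \Gamma_{b,\sigma}(y_0,\X(\omega),U(\omega)) = \Y(\omega)$ in the $p$-variation rough path topology on $\rps(\R^m)$, that is, in $\Omega_T^p(\R^m)$. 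This is the assertion that should arguably be stated as Lemma~\ref{lem:continuity-Y-controlled} in its own right; here it is the intermediate step.

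Next I would feed this into the cost functional. By assumption~\eqref{ass:cost_functional}, $L\colon \R^m\times\Omega_T^p\times L^0(\d t;\mathcal{U})\to\R$ is continuous (and bounded, though boundedness is not needed for this particular almost-sure statement). Since $(\Y^n(\omega),U^n(\omega)) \to (\Y(\omega),U(\omega))$ in $\Omega_T^p(\R^m)\times L^0(\d t;\mathcal{U})$ — the first coordinate by the previous paragraph, the second by the choice of $\omega$ — continuity of $L$ yields $L(\Y^n(\omega),U^n(\omega)) \to L(\Y(\omega),U(\omega))$. As this holds for $\P$-a.e.~$\omega$, we conclude $L(\Y^n,U^n)\to L(\Y,U)$ almost surely, which is the claim.

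The main (and really the only) subtlety is the bookkeeping at the start: one must be careful that $\mathrm{Leb}\otimes\P$-a.e.~convergence of $U^n$ to $U$, via Fubini/Tonelli, delivers for $\P$-a.e.~$\omega$ the convergence $U^n(\omega)\to U(\omega)$ in the $L^0(\d t;\mathcal{U})$-topology (which is metrizable and in which $\mathrm{Leb}$-a.e.~pointwise convergence implies convergence), so that Theorem~\ref{thm:diehl_et_al}'s continuity in the third argument can be applied pathwise. There is no genuine analytic obstacle once the stability theorem is in hand; everything else is continuity of a composition of continuous maps evaluated along an a.s.-convergent sequence, followed by the (trivial) remark that a countable intersection of probability-one sets is again of probability one.
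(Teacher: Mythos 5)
Your proof is correct and follows essentially the same route as the paper: extract from the $\mathrm{Leb}\otimes\P$-a.e.\ convergence the pathwise convergence $U^n(\omega)\to U(\omega)$ in $L^0(\d t;\mathcal{U})$ for $\P$-a.e.\ $\omega$, apply the continuity of the Itô--Lyons-type solution map from Theorem~\ref{thm:diehl_et_al} to deduce $d_{p\text{-}\mathrm{var}}(\Y^n(\omega),\Y(\omega))\to 0$, and then conclude by the continuity assumption~\eqref{ass:cost_functional} on $L$. The paper states these steps more tersely, but your Fubini/Tonelli bookkeeping at the start is exactly the (unspelled-out) content of the paper's first sentence, so the two arguments coincide.
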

\begin{proof}
We have in particular that $U^n(\omega)$ converges towards $U(\omega)$ in $L^0(\d{t},\mathcal{U})$ for a.e.~$\omega\in\Omega$. From Theorem~\ref{thm:diehl_et_al} it follows that
\begin{align*}
\lim_{n\to\infty} d_{p-\textrm{var}}(\Y^{n}(\omega), \Y(\omega)) = 0, \qquad \text{for a.e. } \omega \in \Omega.
\end{align*}
We readily conclude using the continuity assumptions on $L$.
\end{proof}

Combining Corollary~\ref{cor:continuous_approximation} and Proposition~\ref{prop:dens_sig_controls} with the above lemma, the following result is now a direct consequence of the dominated convergence theorem.
\begin{theorem}\label{thm:approximation}
Given assumptions \eqref{ass:cost_functional}, \eqref{ass:filtration}, \eqref{ass:first_on_b} and \eqref{ass:on_b} we have
\begin{align*}
\inf_{U\in\mathcal{A}}J(U) ~=~ \inf_{U\in\mathcal{A}_{c}}J(U) ~=~ \inf_{U \in \Asig}J(U) ~=~ \inf_{U \in \Alog}J(U).
\end{align*}
\end{theorem}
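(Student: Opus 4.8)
The plan is to establish the chain of equalities by proving two opposite inequalities: one coming for free from set inclusions, and the converse coming from the pathwise approximation results combined with dominated convergence.

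First I would record the trivial direction. By Proposition~\ref{prop:dens_sig_controls} we have $\Asig \subset \mathcal{A}_c \subset \mathcal{A}$ and $\Alog \subset \mathcal{A}_c \subset \mathcal{A}$, so that
\[
\inf_{U\in\mathcal{A}}J(U) \;\le\; \inf_{U\in\mathcal{A}_c}J(U) \;\le\; \inf_{U\in\Asig}J(U), \qquad \inf_{U\in\mathcal{A}_c}J(U) \;\le\; \inf_{U\in\Alog}J(U).
\]
It therefore only remains to prove the reverse inequalities, namely $\inf_{U\in\Asig}J(U) \le \inf_{U\in\mathcal{A}}J(U)$, and likewise with $\Alog$ and with $\mathcal{A}_c$ in place of $\Asig$.

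Next I would fix an arbitrary admissible control $U \in \mathcal{A}$ and invoke Proposition~\ref{prop:dens_sig_controls} (respectively Corollary~\ref{cor:continuous_approximation} for the $\mathcal{A}_c$ statement, or the $\Alog$ part of Proposition~\ref{prop:dens_sig_controls}) to obtain a sequence $(U^n)_{n\ge1} \subset \Asig$ with $U^n \to U$ $\,\mathrm{Leb}\otimes\P$-almost everywhere. Writing $\Y^n = \Y^{U^n}$ and $\Y = \Y^{U}$ for the corresponding solutions of \eqref{eq:main_srde} --- which are well defined by Theorem~\ref{thm:diehl_et_al} under \eqref{ass:first_on_b}--\eqref{ass:on_b} --- Lemma~\ref{lem:continuity-Y-controlled} gives $L(\Y^n, U^n) \to L(\Y, U)$ almost surely. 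Since $L$ is bounded by assumption \eqref{ass:cost_functional}, the dominated convergence theorem yields
\[
J(U^n) = \E\bigl[L(\Y^n, U^n)\bigr] \;\xrightarrow[n\to\infty]{}\; \E\bigl[L(\Y, U)\bigr] = J(U),
\]
whence $\inf_{V\in\Asig}J(V) \le \lim_{n\to\infty} J(U^n) = J(U)$. Taking the infimum over all $U \in \mathcal{A}$ gives $\inf_{\Asig}J \le \inf_{\mathcal{A}}J$, and running the identical argument with $\Alog$ and with $\mathcal{A}_c$ closes the remaining gaps.

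I do not expect a genuine obstacle in this argument: the substantive work has all been front-loaded into the stability statement Theorem~\ref{thm:diehl_et_al}, the pathwise density results Corollary~\ref{cor:continuous_approximation} and Proposition~\ref{prop:dens_sig_controls}, and the continuity-of-costs Lemma~\ref{lem:continuity-Y-controlled}, and what is left is just the chaining of inequalities above. The only point requiring a modicum of care is that it is the \emph{boundedness} half of \eqref{ass:cost_functional}, not merely its continuity, that licenses the passage from almost sure convergence of the costs to convergence of their expectations; without it one would need an extra uniform integrability input, precisely the phenomenon discussed in the paragraph on boundedness of the cost functional in Section~\ref{sec:problem_description}.
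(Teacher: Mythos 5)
Your proposal is correct and follows exactly the route the paper takes: the paper presents this theorem as a direct consequence of Corollary~\ref{cor:continuous_approximation}, Proposition~\ref{prop:dens_sig_controls}, Lemma~\ref{lem:continuity-Y-controlled} and dominated convergence, and you have simply spelled out the set-inclusion inequalities and the limiting argument in more detail, including the correct observation that boundedness of $L$ is what licenses the passage from almost sure convergence of costs to convergence of their expectations.
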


\section{Numerical Method and Examples}\label{sec:numerics} 
In this section, we will introduce a numerical method that arises from the parametrization of admissible controls in Section 4.2. We will then evaluate its performance through two case studies of non-Markovian control problems.
The full code is available at \url{https://github.com/hagerpa/sigControl}.

\subsection{Numerical Method}\label{sec:numerical_method}
Fixing a signature truncation level $N\ge 1$, the corresponding sub-classes $\Asig^N \subset \Asig$ and $\Alog^N \subset \Alog$ constitute finite-dimensional parameterizations of admissible controls. 
Indeed, for $\Asig^N$ the set of parameters is given by the coefficients in the truncated tensor series $\ell_1, \dots, \ell_k \in T^N((\R^{d+1})^{\ast})$, i.e., the dimension of the parameter space is given by $k \cdot (1 + (d+1) + \cdots (d+1)^N)$.
For $\Alog^N$, the parameter space is determined by the architecture of the deep neural networks $\mathcal{D}^{\eta_{d+1,N}, k}$, thus characterized by the number of hidden layers $I$ and the number of neurons per layer $q$.
Let us reveal the parametrization in both cases more explicitly by noting that for any $U \in \Asig^N$ (resp. $\Alog^N$) there exists a vector of parameters $\eta$ that defines a function $\theta(\cdot\;; \eta) : G^N(\R^{d+1}) \to \mathcal{U}$ such that $U_t = \theta(\Sig{\hat\X}^{\le N}_{0,t}; \eta)$.
For ease of notation we then also write $U(\eta) \in \Asig^N$ (resp. $\Alog^N$).

The next best objective towards solving the control problem is then to minimize $J(U(\eta))$ over  $\eta$.
This is achieved numerically through a Monte Carlo approximation of the expectation and a time discretization for approximating the signature and solving the rough differential equation.
We will first present the general structure of the algorithm and then comment on the details below.
For the sake of concreteness, we will assume that the cost functional $L$ is of the integral form\footnote{In general, the discretization of the cost functional depends on its specific form, while under the given assumption, a straightforward discretization is given by the Riemann sum.} \eqref{eq:running_costs}.

  \begin{enumerate}[label={(\arabic*)}]
  \item Fix a time grid $\Pi = \{ t_0, t_1, \dots, t_n\}$ with $0 = t_0 < t_1 < \dots < t_n = T$.
   \item \label{enum:alg_sig} Generate $M$ independent realizations $\{(\delta\mathbf{S}^{(i)}_j)_{j=1, \dots, n} \;\vert\; i = 1, \dots, M\}$ of $(\Sig{\hat\X}^{\le N}_{t_{j-1}, t_j})_{t_j \in \Pi}$; then set $ \mathbf{S}^{(i)}_0 = \mathbf{1}$ and iteratively $\mathbf{S}^{(i)}_j = \mathbf{S}^{(i)}_{j-1}\otimes \delta \mathbf{S}^{(i)}_j $.
   
   \item \label{enum:alg_cont} Evaluate the signature controls $U^{(i);\eta}_j = \theta(\mathbf{S}^{(i)}_j; \eta)$ and store the gradients $\nabla_\eta \theta(\mathbf{S}^{(i)}_j; \eta)$, $j=1,\dots,n$.

   \item Calculate or use a suitable numerical scheme to find (approximate) solutions $\{(Y^{(i);\eta}_j)_{j=0, \dots, n} \;\vert\; i = 1, \dots, M\}$ on the grid $\Pi$ of the rough differential equation \eqref{eq:vol_controlled} corresponding to the samples $(\delta\mathbf{S}^{(i)}_j)$.

   \item \label{enum:alg_cost}Calculate the approximate expected costs
   \begin{align}\label{eqn:MC_approx_optimal_value}
       \frac{1}{M}\sum_{i=1}^M \left\{\sum_{t_j \in \Pi}f(t_j, Y^{(i);\eta}_j, U^{(i);\eta}_j)(t_j - t_{j-1}) + g(Y^{(i);\eta}_{n}) \right\}
   \end{align}
   and its gradients with respect to $\eta$ using the previously stored gradients of the control. Then updated $\eta$ using a step of stochastic gradient descent or a similar method.
   \item Starting from 2., repeat the above procedure with the new parameters several times or until no further improvement.
   \item For a larger number of samples $M^\prime \gg M$ follow the steps 2. - 5. above once again and use \eqref{eqn:MC_approx_optimal_value} as an approximation to the optimal control value.
  \end{enumerate}

\subsection*{Calculation of the signature}
In most cases of interest, we are confined to approximate realizations of the signature $\Sig{\hat\X}^{\le N}_{t_{j-1}, t_j}$ in step \ref{enum:alg_sig} since direct sampling is not possible. Assume that $\X$ is the limit of the lifted piecewise linear approximation of the process $X = \pi_1(\X)$, from which we can generate samples\footnote{If direct sampling from the rough path $\X$ is possible, we use piecewise geodesic interpolation instead. However, it's worth noting that even for a Brownian motion, direct sampling from the rough path is not possible.}. As discussed in Section~\ref{sec:rough_paths}, this holds true, for example, in the case of the Stratonovich-lift of a Brownian motion. We can then approximate realizations of the signature by sampling $X$ on a refined grid of the interval $[t_{j-1}, t_j]$ and calculating the signature of the linearly interpolated path. 
Here a refinement can be beneficial for the optimization procedure, since further information from the evolution of $X$ between $t_{j-1}$ and $t_j$ can be incorporated without the need to re-evaluate the control.
The signature of a piecewise linear path can be calculated exactly, and implementations of the underlying algebraic structure are readily accessible, e.g., in the \emph{iisignature} package \cite{iisignature}. This package also provides the necessary functionality to join the signatures over consecutive time intervals. We also note that the log-signature of a piecewise linear path can be calculated directly using the Baker–Campbell–Hausdorff formula, and the iisignature package also provides this functionality.

\subsection*{Numerical schemes}\label{sec:higher-order-scheme}
Similarly to the signature, calculating the RDE solutions %
explicitly is mostly not an option.
Instead, one can use various numerical schemes to calculate approximate solutions.
Having already calculated approximations of the signature increments $(\delta\mathbf{S}^{(i)}_j)$ in the previous step, it is natural to use a higher-order Euler (or Taylor-type) scheme given by
\begin{align}\label{eq:higher-order-scheme}
   Y^{(i);\eta}_j = Y^{(i);\eta}_{j-1} + b(Y^{(i);\eta}_{j-1}, U^{(i);\eta}_{j-1})(t_j - t_{j-1}) + \mathcal{E}_{(\sigma)}(Y_{j-1}^{(i);\eta}, \delta \mathbf{S}^{(i)}_j),
   \end{align}
   for all $j= 1, \dots, n$, starting with $Y^{(i);\eta}_0 = y_0$, where
   \begin{align*}
       \mathcal{E}_{(\sigma)}(y, \delta \mathbf{S}^{(i)}_j) &=  \sum_{k = 1}^{N} \sum_{i_1,\dots, i_k = 1}^d \sigma_{[i_1]}(y) \cdots \sigma_{[i_k]}(y) \mathrm{Id}(y)\langle e_{i_1 \dots i_k},\delta\mathbf{S}_j\rangle \\
       &=
       \sigma(Y_{j-1}^{(i);\eta})\pi_1(\delta\mathbf{S}_{j}) + \sum_{i_1,i_2=1}^d\sum_{l=1}^m \Big[ \sigma_{i_1}^l\,\frac{\partial}{\partial_{y_l}} \sigma_{i_2}\Big](Y_{j-1}^{(i);\eta})\langle e_{i_1 i_2},\delta\mathbf{S}_j\rangle + \dots
   \end{align*}
   with $\sigma_{[i]} := \sum_{l=1}^m \sigma^l_i\frac{\partial}{\partial y_l}$ and the identity map $\mathrm{Id}: y\mapsto y$. Note that one recovers the usual Euler scheme for $N = 1$ and that the choice $N = 2$ corresponds to the Milstein scheme. In general, to guarantee convergence, one needs to choose $N \geq \integer{p}$. This class of schemes were analyzed in \cite{FV10} first. 
   If no refinement of the intervals $[t_{j-1}, t_j]$ is used when calculating the signature increments, we have $\delta \mathbf{S}^{(i)}_j = \exp_{\otimes}^{N}(X^{(i)}_{t_j} - X^{(i)}_{t_{j-1}})$.
   Such \textit{simplified} higher-order Euler schemes and their convergence rates were analyzed e.g. in \cite{DNT12,FR14,BFRS16}. In general, higher-order Euler schemes are easy to implement, but they require the calculation of higher order derivatives, too, which can be costly in high dimensions. Moreover, they have a bad performance in case the underlying RDE is stiff, in which case implicit schemes typically perform better. In \cite{RR22}, a class of simplified Runge-Kutta schemes for RDEs was introduced and analyzed that contains derivative-free and implementable schemes including implicit ones.

\subsection*{Numerical optimization procedure}
The calculation of gradients in step \ref{enum:alg_cont} and \ref{enum:alg_cost} is automatized  when using standard machine learning software libraries such as \emph{PyTorch} \cite{pytorch}.
Such packages also provide state of the art variants of stochastic gradient descent.
In the numerical examples below the performance was rather insensitive to the specific choice of optimization method and the choice of hyper-parameters.
In this case the ``Adam'' method \cite{kingma2014adam} lead to sufficient accuracy.

\subsection*{Linearization of the control problem}
In special cases the algebraic properties of the signature can further be employed to transform the optimization problem into a form that allows to efficiently use non-stochastic solvers.
This approach was first suggested in \cite{kalsi2020optimal} in the context of an optimal execution problem (see also Section~\ref{sec:optimal_execution} below). 
Without going into details, this approach is generally possible if the  the system \eqref{eq:main_srde} can be solved by integration, i.e., when $b(y,u)$ and $\sigma(y)$ are independent of $y$, and when $b$ and $L$ are of a polynomial form.
Given sufficient integrability of $\Sig{\hat\X}_{0, T}$, the shuffle property of the signature then allows to rewrite the expected costs associated to a signature control $U^\ell = \langle \ell, \Sig{\hat\X}_{0, \cdot} \rangle$ for some $\ell \in T^{N}(\R^{d+1})$ as
\begin{align*}
    J(U^\ell) = \langle P(\ell),\E[ \Sig{\hat\X}^{\le N^{\prime}}_{0, T}]\rangle,
\end{align*}
where $P: T^{N}(\R^{d+1}) \to T^{N^\prime}(\R^{d+1})$ with $N^{\prime} \ge N$ is polynomial in the coefficients of ${\ell \in T^{N}(\R^{d+1})}$.
Estimating the truncated expected signature using a Monte-Carlo average an optimal $\ell$ can then be obtain from a deterministic solver for polynomial optimization.
As observed in \cite{kalsi2020optimal}, this reformulation of the control problem preserves quadratic convex structures, and thus allows to solve problems admitting such a structure as quadratic programs.

\subsection{Case study 1: Optimal tracking of fractional Brownian motion}\label{sec:optimal_tracking}

As a first benchmark example we consider the problem of optimally
tracking a fractional Brownian motion with a process whose speed can
be controlled at quadratic costs. Letting the Hurst parameter $H$ vary
in $(0,1]$ allows us to test our numerical method in a range of cases
outside the Markov and semimartingale regimes typically considered in
the literature.

To set the stage, let $\xi$ be a one-dimensional fractional Brownian
motion on some probability space $(\Omega,\mathcal{F},\P)$ and put $X:=\xi$ so
that $\xi$ is trivially adapted to the augmented filtration generated
by $(\sigma(X_s; s \in [0,t]))_{t \geq 0}$. Notice that this way the
controller will \emph{not} have access to the full past before time 0
of the fractional Brownian motion; granting this extended access leads
to a different, yet equally relevant control problem, which, however,
is computationally more demanding and thus left for future experiments
at this point.

Now define the controlled process
\begin{align}\label{eq:trackingdynamics}
Y^{U}_t = y_0 + \int_0^{t}U_s \d{s}-\xi_t,
\end{align} 
where $U$ is a progressively measurable process that represents the
speed and direction of the tracking. Since a one-dimensional
fractional Brownian motion allows for a rough path lift
\begin{align*}
  \mathbb{X}_t = \exp_{\otimes}^{\lfloor p\rfloor}(X_t), \quad \text{
  for some } p\in(1/H, 1+1/H),
\end{align*}
this setting fits into our theoretical framework~\eqref{eq:main_srde}
by choosing $\mathcal{U} = \R$, ${b(y, u) = u}$, $\sigma =-1$.  %
We measure the tracking performance by the cost functional
\begin{align}\label{eq:tracking_problem}
L(\Y^U,U)= \frac{1}{2}\int_0^{T}\left((Y^{U}_t )^{2} + \kappa (U_t)^{2} \right) \d{t}
\end{align}
where  $\kappa > 0$ is the penalization parameter; we also confine
ourselves to the natural class of progressively measurable control processes $U \in
L^2(\mathbb{P}\otimes dt)$.

\begin{remark}
  This kind of tracking problem finds several applications in  mathematical finance.
For instance, following \cite{bank2017hedging}, we can consider $\xi$
to be the hedging strategy of a contingent claim in an idealized frictionless
reference market model driven by $X$; $y_0+\int_0^. U_s \d{s}$ could be the
evolution of a trader's actual hedging position when she is confronted with market impact costs as captured by $\kappa \int_0^T U_t^2\d{t}$.
Due to these costs, the trader has to allow for a nonzero hedging
error $Y^U$ and the resulting risk can be measured by $\mathbb{E}\left[\int_0^{T}\left(\frac{1}{2} (Y^{U}_t )^{2} \right) \d{t}\right]$. Combining this with expected impact costs leads to~\eqref{eq:tracking_problem}.
\end{remark}

Our choice of this particular benchmark problem is motivated by its
analytic tractability: \cite{bank2017hedging}  describes control
policies and the problem value even for general targets $\xi$ in. For our particular
tracking problem with fractional Brownian motion, this gives:

\begin{theorem}\label{thm:fbm_benchmarks}
    The minimal tracking costs for a fractional Brownian motion $\xi$ with Hurst parameter $H \in (0,1)$ are 
    \begin{align} 
      \inf_{U} \E[L(Y^U,U)] = 
      & \; \frac{1}{2} \sqrt{\kappa} \tanh(\tau^\kappa(0)) \left( y_0  \right)^2 \nonumber \\ & +
        \frac{1}{2} \int_0^T \int_0^t\left(\int_s^T(z_H(t,s)-z_H(u,s))\frac{\cosh(\tau^\kappa(u))}
      {\sqrt{\kappa}\sinh(\tau^\kappa(t))}du\right)^2dsdt \label{eq:cost1} \\
      & + \frac{1}{2} \int_0^T \sqrt{\kappa}
        \tanh(\tau^\kappa(t)) \frac{\left(\int_t^Tz_H(u,t)\cosh(\tau^\kappa(u))du\right)^2}{\kappa
      \sinh^2(\tau^\kappa(t))} dt
              < \infty. 
    \nonumber
    \end{align}
    where, for $0 \leq s \leq t \leq T$, we let 
    $
   \tau^\kappa(t) := ({T-t})/{\sqrt{\kappa}} , \quad 0 \leq t \leq T,
$
 and
        \begin{align*}
      z_H(t,s) := c_H 
         \left(\frac{t}{s}\right)^{H-\frac{1}{2}}(t-s)^{H-1/2}-(H-\frac{1}{2})s^{\frac{1}{2}-H}\int_s^tu^{H-\frac{3}{2}}(u-s)^{H-\frac{1}{2}}du,
    \end{align*}
    with
    \begin{align*}
         c_H :=
      \left(\frac{2H\Gamma(\frac{3}{2}-H)}{\Gamma(H+\frac{1}{2})\Gamma(2-2H)}\right)^{\frac{1}{2}}.
    \end{align*}
\end{theorem}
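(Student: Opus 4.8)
The plan is to reduce the statement to the general solution of the linear--quadratic tracking problem from \cite{bank2017hedging} and then to make all occurring expressions explicit for a fractional target by means of its Volterra (Molchan--Golosov) representation.

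First I would write $\xi_t=\int_0^t z_H(t,s)\,\d W_s$ for a standard Brownian motion $W$, with $z_H$ and $c_H$ exactly the kernel and constant appearing in the statement, and recall the classical fact that the $\P$--augmented filtration generated by $\xi$ coincides with the one generated by $W$ (invertibility of the Molchan--Golosov transform). This has two consequences. It gives the conditional expectations entering the value formula, namely $\E[\xi_u\mid\mathcal F_t]=\int_0^{t\wedge u}z_H(u,s)\,\d W_s$ (which equals $\xi_u$ for $u\le t$ and is a genuine stochastic integral for $u>t$), and it provides the Gaussian integrability ($\E\xi_t^2=t^{2H}$, $\xi\in L^2(\P\otimes\d t)$) needed to invoke the general results and to justify the interchanges below.

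Second, with $A^U_t:=y_0+\int_0^t U_s\,\d s=Y^U_t+\xi_t$, the functional $U\mapsto J(U)$ is strictly convex and coercive (since $\kappa>0$), hence has a unique minimiser, characterised by the first--order condition; this forces the optimal pair to solve the linear forward--backward system $\d A_t=U_t\,\d t$, $\kappa\,\d U_t=(A_t-\xi_t)\,\d t+\d N_t$, with $A_0=y_0$, $U_T=0$ and $N$ a martingale, as in \cite{bank2017hedging}. The scalar Riccati equation $\alpha'+\alpha^2=1/\kappa$, $\alpha(T)=0$, has solution $\alpha(t)=-\tfrac1{\sqrt\kappa}\tanh(\tau^\kappa(t))$, and a linear backward equation identifies the affine ``signal'' part, so that
\[
U^*_t = \alpha(t)\,A^*_t + \gamma_t,
\qquad
\gamma_t = \frac{1}{\kappa\cosh(\tau^\kappa(t))}\int_t^T \cosh(\tau^\kappa(r))\,\E[\xi_r\mid\mathcal F_t]\,\d r .
\]
Because $\xi$ is centered, $\gamma_0=0$; an integration by parts in time (using $U^*_T=0$, the identity $\kappa U^*=$ adjoint variable, and that $\int_0^\cdot A^*\,\d N$ is a mean--zero martingale) then reduces $J(U^*)$ to $\tfrac12\sqrt\kappa\tanh(\tau^\kappa(0))\,y_0^2-\tfrac12\,\E\int_0^T Y^{U^*}_t\,\xi_t\,\d t$, with all $y_0$--linear contributions cancelling since the target is centered --- this already accounts for the first summand of the claim.

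Third, and this is the main computational obstacle, I would make the remaining expectation explicit by representing $A^*$, the tracking error $Y^{U^*}$, and the martingale part of $U^*$ as stochastic integrals against $W$ with deterministic kernels built from $z_H$, $\cosh(\tau^\kappa(\cdot))$ and $\sinh(\tau^\kappa(\cdot))$. A stochastic Fubini interchange yields, for the centered part of the tracking error, $\tilde Y_t = \tfrac{1}{\sqrt\kappa\sinh(\tau^\kappa(t))}\int_0^t\big(\int_s^T (z_H(t,s)-z_H(u,s))\cosh(\tau^\kappa(u))\,\d u\big)\,\d W_s$, and shows that the $\d W_t$--coefficient of $U^*$ is $\tfrac{1}{\kappa\cosh(\tau^\kappa(t))}\int_t^T z_H(u,t)\cosh(\tau^\kappa(u))\,\d u$; the It\^o isometry then turns $\tfrac12\int_0^T\mathrm{Var}(Y^{U^*}_t)\,\d t$ into the double Lebesgue integral and the contribution of the control's martingale part into the last summand, once the prefactors $\sqrt\kappa\tanh(\tau^\kappa(t))/(\kappa\sinh^2(\tau^\kappa(t)))$ are tracked. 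The delicate points are the admissibility of these Fubini interchanges --- the kernel $z_H(t,\cdot)$ lies only in $L^2([0,t])$, with a singularity at $s=t$ when $H<\tfrac12$ --- and the verification that the candidate $U^*$ genuinely belongs to $L^2(\P\otimes\d t)$, so that it is indeed the minimiser and all martingale terms have vanishing expectation.
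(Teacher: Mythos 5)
Your proof is correct in strategy and, with the computational details filled in, arrives at the paper's formula via the same underlying mathematics. The difference is in how the cost decomposition is obtained. The paper's proof is essentially a chain of citations: it invokes Theorem~1 of \cite{bank2017hedging} for the optimal strategy, Theorem~3.4 of \cite{BankVoss} for the closed-form value in terms of the tracking process $\hat\xi_t=\int_t^T\E[\xi_u\mid\mathcal F_t]\frac{\cosh(\tau^\kappa(u))}{\sqrt\kappa\sinh(\tau^\kappa(t))}\,du$ (three summands: an $(x-\hat\xi_0)^2$ term, an $\E\int(\xi_t-\hat\xi_t)^2\,dt$ term, and a $\E\int\sqrt\kappa\tanh(\tau^\kappa)\,d\langle\hat\xi\rangle$ term), and Theorem~4.2 of \cite{PoorFBM} for $\E[\xi_u\mid\mathcal F_t]=\int_0^t z_H(u,s)\,dB_s$, with the rest being It\^o-isometry bookkeeping. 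You instead re-derive the \cite{BankVoss} decomposition from scratch: first-order conditions, the forward--backward linear system, the scalar Riccati equation for $\alpha$, and an integration by parts that collapses $J(U^*)$ to $\frac12\sqrt\kappa\tanh(\tau^\kappa(0))y_0^2-\frac12\E\int_0^T Y^{U^*}_t\xi_t\,dt$. Your deterministic-mean computation confirming $\frac{y_0}{2}\E\int_0^T Y^{U^*}_t\,dt=\frac12\sqrt\kappa\tanh(\tau^\kappa(0))y_0^2$ is correct (one has $\E[A^*_t]=y_0\cosh(\tau^\kappa(t))/\cosh(\tau^\kappa(0))$ and $\int_0^T\cosh(\tau^\kappa)\,dt=\sqrt\kappa\sinh(\tau^\kappa(0))$). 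Your use of the Molchan--Golosov kernel directly (writing $\xi_t=\int_0^t z_H(t,s)\,dW_s$ and identifying $W$ with the paper's $B$) is the same ingredient as the \cite{PoorFBM} prediction formula, merely repackaged. The route you take is more self-contained, at the cost of fleshing out the variational derivation and the nontrivial Fubini/It\^o-isometry bookkeeping that would split your residual term $-\frac12\E\int Y^{U^*}\xi\,dt$ into the paper's two remaining summands; you rightly flag the Fubini interchanges on the singular kernel $z_H$ and the $L^2$ admissibility of $U^*$ as the delicate steps to justify.
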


\begin{proof}
  The general form of the optimal tracking strategy is given in Theorem~1 of \cite{bank2017hedging}. In conjunction with this, Theorem~3.4 of~\cite{BankVoss}
 identifies the minimal tracking error as
 \begin{align*} 
      \inf_{U} \E[L(Y^U,U)] = & \;\frac{1}{2}\sqrt{\kappa}\tanh(\tau^\kappa(0))(x-\hat{\xi}_0)^2%
      +
        \frac{1}{2} \mathbb{E} \left[ \int_0^T (\xi_t - \hat{\xi}_t)^2 dt \right] \nonumber \\
      & + \frac{1}{2} \mathbb{E} \left[ \int_0^T \sqrt{\kappa}
        \tanh(\tau^\kappa(t)) d\langle \hat{\xi} \rangle_t \right]
 \end{align*}
 The tracking process $\hat{\xi}$ introduced there takes here the form
     \begin{equation*}
     \hat{\xi}_t :=
     \int_t^T \mathbb{E}\left[\xi_u\middle| \mathcal F_t\right] \frac{\cosh(\tau^\kappa(u))}
      {\sqrt{\kappa}\sinh(\tau^\kappa(t))} du, 
    \quad  0 \leq t \leq T.  
    \end{equation*}
     For $\mathcal F_t$ generated by $\sigma(\xi_s, s \in [0,t])$, the conditional expectation $\mathbb{E}\left[\xi_u\middle| \mathcal F_t\right]$ is computed in Theorem~4.2 in \cite{PoorFBM}. With $z_H$ as above we obtain
 \begin{align*}
     \mathbb{E}\left[\xi_u\middle| \mathcal F_t\right] = \int_0^t z_H(u,s) dB_s
    \end{align*} 
    where \begin{align*}
      B_t := \frac{2H}{c_H}\int_0^t s^{H-\frac{1}{2}}dM^H_s ,
      \quad t \geq 0
 \end{align*}
    is the standard Brownian motion constructed  from the Gaussian martingale obtained from the fBM $\xi$ via
$%
      M^H_t := \int_0^t w_H(t,s) d\xi_s$,  $t \geq 0,
$ %
    with     
    \begin{align*}
w_H(t,s):=\frac{s^{\frac{1}{2}-H}(t-s)^{\frac{1}{2}-H}}{2H\int_0^1u^{\frac{3}{2}-H}(1-u)^{\frac{1}{2}-H}du}
      , \quad s<t.
    \end{align*}

  Straightforward computations starting from this show that
      \begin{align*}
      \mathbb{E} \left[ \int_0^T (\xi_t - \hat{\xi}_t)^2 dt \right]=\int_0^T \int_0^t\left(\int_s^T(z_H(t,s)-z_H(u,s))K(t,u)du\right)^2dsdt
    \end{align*}
    and
    \begin{align*}
      \langle \hat{\xi}\rangle_t = \int_0^t \frac{\left(\int_s^Tz_H(u,s)\cosh(\tau^\kappa(u))du\right)^2}{\kappa
      \sinh^2(\tau^\kappa(s))} ds, \quad t \in [0,T].
    \end{align*}
  Our formula for the minimal tracking error follows now by plugging the identities from the last two displays into the general cost formula stated above.
  \end{proof}

\begin{table}[h]
\centering
\begin{tabular}{ c  r | c c c c c c c c c c c }
  &  $H$ &  1/16 &  1/8 &  1/4 &  1/2 &  3/4 &  1  \\
\hline
  \multicolumn{2}{c |}{th. optimum}   & 0.293 & 0.264 & 0.206 & 0.124 & 0.071 & 0.034  \\ 
\hline
\multirow{ 5 }{*}{ $\Asig$ }& $N = 1$  & 0.329 & 0.286 & 0.223 & 0.146 & 0.101 & 0.073  \\ 
& $N = 2$  & 0.315 & 0.275 & 0.211 & 0.127 & 0.076 & 0.041  \\ 
& $N = 3$  & 0.310 & 0.273 & 0.210 & \textbf{ 0.124 } &  \textbf{0.073} & 0.038  \\ 
& $N = 4$  & 0.304 & 0.270 &  \textbf{0.209} & \textbf{ 0.124 } &  \textbf{0.073} & 0.038  \\ 
& $N = 5$  & 0.305 & 0.270 &  \textbf{0.209} & \textbf{ 0.124 } &  \textbf{0.073} & 0.038  \\ 
\hline
\multirow{ 5 }{*}{ $\Alog$ }& $N = 1$  & 0.315 & 0.276 & 0.214 & 0.135 & 0.083 & \textbf{ 0.034 }  \\ 
& $N = 2$  & 0.307 & 0.272 & 0.210 & \textbf{ 0.124 } &  \textbf{0.073} & \textbf{ 0.034 }  \\ 
& $N = 3$  & 0.301 & 0.269 &  \textbf{0.209} & \textbf{ 0.124 } & \textbf{ 0.072 } & \textbf{ 0.034 }  \\ 
& $N = 4$  & \textbf{ 0.300 } & \textbf{ 0.267 } & \textbf{ 0.208 } & \textbf{ 0.124 } & \textbf{ 0.072 } & \textbf{ 0.034 }  \\ 
& $N = 5$  & \textbf{ 0.300 } & \textbf{ 0.267 } &  \textbf{0.209} & \textbf{ 0.124 } & \textbf{ 0.072 } &  \textbf{0.035}  \\ 
\hline\hline
\multirow{ 5 }{*}{ $\Asig^Y$ }& $N = 1$  & 0.335 & 0.287 & 0.216 & 0.130 & 0.082 & 0.050  \\ 
& $N = 2$  & 0.317 & 0.277 & 0.211 &  \textbf{0.125} & \textbf{ 0.072 } &  \textbf{0.035}  \\ 
& $N = 3$  & 0.309 & 0.272 &  \textbf{0.209} & \textbf{ 0.124 } & \textbf{ 0.072 } &  \textbf{0.035}  \\ 
& $N = 4$  & 0.304 & 0.270 & \textbf{ 0.208 } & \textbf{ 0.124 } & \textbf{ 0.072 } &  \textbf{0.035}  \\ 
& $N = 5$  & 0.304 & 0.269 & \textbf{ 0.208 } & \textbf{ 0.124 } & \textbf{ 0.072 } & 0.036  \\ 
\hline
\multirow{ 5 }{*}{ $\Alog^Y$ }& $N = 1$  & 0.327 & 0.281 & 0.211 & \textbf{ 0.124 } & \textbf{ 0.072 } & \textbf{ 0.034 }  \\ 
& $N = 2$  & 0.304 & 0.271 &  \textbf{0.209} & \textbf{ 0.124 } & \textbf{ 0.072 } & \textbf{ 0.034 }  \\ 
& $N = 3$  & 0.302 & 0.269 &  \textbf{0.209} & \textbf{ 0.124 } & \textbf{ 0.072 } & \textbf{ 0.034 }  \\ 
& $N = 4$  &  \textbf{0.300} & \textbf{ 0.267 } & \textbf{ 0.208 } & \textbf{ 0.124 } & \textbf{ 0.072 } & \textbf{ 0.034 }  \\ 
& $N = 5$  & \textbf{ 0.299 } &  \textbf{0.268} & \textbf{ 0.208 } & \textbf{ 0.124 } & \textbf{ 0.072 } & \textbf{ 0.034 } 
\end{tabular}
\caption{Numerical results for the optimal tracking of a fractional Brownian motion with different Hurst parameters $H$ using (open and  closed loop) controls in $\Asig$ and $\Alog$ ($I = 2$, $q = 30 + \eta_{2, N}$), with various signature truncation levels $N$. Presented are the estimated expected costs. The fixed model parameters are $y_0=0$, $T=1$, and $\kappa=0.1$. An overall time discretization of $\Delta t = 10^{-3}$ was used for the calculation of signatures, $Y^U$, and the cost functional. The number of training and testing paths was $2^{19}$ and $2^{20}$, respectively. The Monte Carlo resampling error was below $0.0002$. The first row presents the continuous-time optimal values calculated using \eqref{eq:cost1}.\label{tab:tracking_problem}}
\end{table}

We have tested our approximation method against the optimal values obtained from Theorem~\ref{thm:fbm_benchmarks} for several choices of Hurst parameters $H$ using strategies from $\Asig$ and $\Alog$ with different signature truncation levels $N$.
The remaining parameters were fixed to $y_0 = 0$, $T=1$  and $\kappa = 0.1$.
The outcomes of these numerical test are collected in Table~\ref{tab:tracking_problem}.
We notice that using our method we can reach reasonably accurate approximations of the continuous time optimum.
Overall the performance is improved when increasing the truncation level and when going from linear strategies in $\Asig$ to deep strategies in $\Alog$.
It turns out that already beyond truncation levels $N=3$ or $N=4$ there is no further improvement and for cases $H\ge 1/2$ even level $N=2$ seems to be sufficient.

In the lower half of Table~\ref{tab:tracking_problem} we present numerical results obtained with a ``closed loop'' version of our method.
Instead of using the signature of the driving noise, the controls denoted by $\Asig^{Y}$ (resp.\ $\Alog^{Y}$) were defined by applying linear functionals (resp.\ DNNs) to the truncated signature of the time-augmented controlled process $\widehat{Y}^U$. This approach is a special case of the method proposed in \cite{hoglund2023a} and we stress that the results are only presented for numerical comparison. Our theory does not cover this type of signature controls.
For more details on the implementation, we refer to the code repository \url{https://github.com/hagerpa/sigControl}.

Clearly, the closed loop version of the problem yields values similar to the open loop formulation studied rigoroulsy in our paper. This closeness can be expected here since in this special model closed loop controls can draw on the same information flow as the open loop controls. Indeed, one can readily reconstruct via~\eqref{eq:trackingdynamics} the driver $\xi$ from the system's evolution $Y^U$ generated by a control $U$ adapted to the filtration generated by the system itself. This makes this particular closed loop problem actually a full information problem with a ``sufficient statistic'' $Y^U$ which differs from the rough driver $\xi$ only by a controlled drift process.

Let us comment on a few more aspects  in detail:

Due to the Markovianity in the Brownian monition case $H=1/2$, there exists an optimal control in feedback form $U^{\ast}_t = \alpha(t, X_t, Y^{U^{\ast}}_t)$ for a suitable deterministic function $\alpha$ (cf. \cite{bank2017hedging}). It turns out that to learn this control in open loop form $U^{\ast}_t = \theta(X\vert_{[0,t]})$ we only need level $N=2$ log-signatures in $\Alog$ and level $N=3$ signatures in $\Asig$ to achieve sufficient accuracy.
The second special case is $H=1.0$, where the trajectories of $X$ are straight lines starting at the origin with a standard normal distributed slope $X_1$ which becomes known right after the start.
In this case the optimal solution is of the form $U^\ast_t = \mathbf{1}_{\{t>0\}}X_1 f(t) = \mathbf{1}_{\{t>0\}} \frac{X_t}{t}f(t)$, where $f: [0,T] \to \R$ is a suitable nonlinear deterministic function (cf. \cite{bank2017hedging}).
While with deep neural networks this control can easily be learned as a function of the first level signature $(t, X_t)$, the linear model needs a much higher truncation level to approximate the nonlinear function $f(t)/t$ with sufficient precision.
This carries over to an explanation for the general outperforming of $\Alog$ strategies over those from $\Asig$ when comparing identical signature truncation levels.

The method turned out to be efficient for this case study despite the fact that the cost functional and the drift coefficient are unbounded over the the set of admissible controls, contrary to assumptions \eqref{ass:cost_functional} and \eqref{ass:first_on_b} needed for the applicability of Theorem~\ref{thm:approximation}.
This carries little surprise as we expect that our approximation results carries over to the case of $L^2$-type cost functionals (see paragraph on the \textit{boundedness of the cost functional} in Section~\ref{sec:problem_description}).

\subsection{Case study 2: Non-Markovian optimal execution}\label{sec:optimal_execution}

As a second benchmark, we consider an instance of the optimal order execution problem of \cite{kalsi2020optimal}. There $X$ describes the fluctuations in the fundamental price of some financial asset. The controller seeks to unwind an initial inventory $q_0>0$ of shares over a (typically short) period $[0,T]$ by choosing trading rates $U=(U_t)_{t \in [0,T]}$ so as to maximize on average the proceeds $W^U_T:=\int_0^T X_t U_tdt$ while taking into account market impact costs $\kappa \int_0^T U_t^2dt$ from her sales. Any inventory remaining $Q^U_T :=q_0-\int_0^T U_tdt$ at time $T$ is marked to market at $Q_T X_T$ and penalized by $\kappa_T Q_T^2$. 

In the framework we study above this corresponds to the choice
\begin{align*}
   Y^U_0=(0,q_0,x_0), 
   \quad dY^U_t = (X_t U_t, -U_t,0) dt + (0,0,1) dX_t, \quad t \in [0,T],
\end{align*}
i.e., $Y^U:=(W^U,Q^U,X)$ with cost functional
\begin{align*}
   L(\Y^U,U)= -W^U_T -Q^U_T X_T + \int_0^T \kappa U^2_t dt + \kappa_T (Q^U_T)^2.
\end{align*}

\begin{table}[h]
\setlength{\tabcolsep}{10pt}
\centering
\begin{tabular}{ c  r | c c c c c c c c c c c }
  &  $H$ &  1/16 &  1/8 &  1/4 &  1/2 &  3/4 &  7/8  \\
\hline 
  \multirow{ 4 }{*}{ \makecell{$\Asig$\\ using \cite{kalsi2020optimal}}}& $N = 1$  & 0.03 & 0.02 & 0.01 & 0.00 & 0.02 & 0.01  \\ 
& $N = 2$  & 1.36 & 0.85 & 0.30 & 0.00 & 0.16 & 0.43  \\ 
& $N = 3$  & 2.46 & 1.45 & 0.44 & 0.00 & 0.20 & 0.41  \\ 
& $N = 4$  & 2.61 & 1.51 & 0.47 & 0.00 & 0.20 & 0.44  \\ 
\hline
\hline
\multirow{ 5 }{*}{ $\Asig$ }& $N = 1$  & 0.03 & 0.02 & 0.01 & 0.00 & 0.00 & 0.01  \\ 
& $N = 2$  & 1.35 & 0.85 & 0.30 & 0.00 & 0.16 & 0.33  \\ 
& $N = 3$  & 2.46 & 1.45 & 0.44 & 0.00 & 0.19 & 0.41  \\ 
& $N = 4$  & 2.55 & 1.51 & 0.47 & 0.00 & 0.20 & 0.40  \\ 
& $N = 5$  & 2.59 & 1.54 & 0.47 & 0.00 & 0.20 & 0.41  \\ 
\hline
\multirow{ 5 }{*}{ $\Alog$ }& $N = 1$  & 0.03 & 0.02 & 0.01 & 0.00 & 0.02 & 0.07  \\ 
& $N = 2$  & 1.36 & 0.87 & 0.30 & 0.00 & 0.20 & 0.43  \\ 
& $N = 3$  & 2.53 & 1.47 & 0.44 & 0.00 & 0.20 & 0.44  \\ 
& $N = 4$  & 2.61 & 1.53 & 0.48 & 0.00 & 0.21 & 0.44  \\ 
& $N = 5$  & 2.69 & 1.54 & 0.49 & 0.00 & 0.21 & 0.43  \\ 
\end{tabular}

\caption{Numerical study of the optimal execution problem with fractional price fluctuations for different Hurst parameters $H$ using strategies in $\Asig$ and $\Alog$ ($I = 2$, \break$q = 30 + \eta_{2, N}$), with various signature truncation levels $N$. 
Presented are \textbf{relative improvements in percent} w.r.t.\ the TWAP strategy $J(U^\circ) \approx 0.9990$.
The fixed model parameters are $q_0=1.0$, $T=1$, $\kappa=0.001$, $\kappa_T=0.1$ and $\sigma=0.02$. An overall time discretization of $\Delta t = 10^{-2}$ was used. The number of training and testing paths was $2^{19}$ and $2^{22}$, respectively. The Monte Carlo resampling error was below $0.001$. 
The first block contains benchmarks obtained with the linearization method from \cite{kalsi2020optimal}.}\label{tab:optimal_execution}
\end{table}

In our numerical experiments, we  followed \cite[Section~5.4]{kalsi2020optimal}, and chose $X=x_0 + \sigma \xi$ with $x_0 = 1.0$,  $\sigma=0.02$ and $\xi$ a fractional Brownian motion.
We tested our methodology for a range of Hurst-parameters $H$ and signature truncation levels $N$. 
The remaining parameters are given by $\kappa=10^{-3}$, $\kappa_T=10^{-1}$, $q_0=1$, and $T=1$.
For comparison, we calculated benchmarks with the method proposed in \cite{kalsi2020optimal}.
As described in Section~\ref{sec:numerical_method}, this method is based on a transformation to a deterministic optimization problem in terms of the expected signature.

Table~\ref{tab:optimal_execution} collects the results of all tested scenarios. For better comparison we do not present the total expected costs, but the relative improvement compared to a trading strategy liquidating at a suitably chosen constant speed $U^\circ \equiv u \in \R$, also called the \emph{time weighted average price} (TWAP) strategy \cite[Section~6.3]{cartea2015algorithmic}. The corresponding optimal rate and associated costs turn out to be
$$u = q_0 \frac{\kappa_T}{\kappa + T\kappa_T}, \qquad J(U^\circ) = x_0 q_0 - q_0^2 \frac{\kappa \kappa_t}{\kappa + T\kappa_T}.$$

Similar to the previous case study, we find an overall improvement when increasing the signature truncation level $N$ and when moving from strategies in $\Asig$ to strategies in $\Alog$. We also verify that the results of $\Asig$ strategies trained with our Monte Carlo method are close to those obtained with the approach proposed in \cite{kalsi2020optimal}. The slightly better performance of the latter method is expected due to the availability of a more efficient optimization procedure when utilizing the quadratic convex structure of the problem. Nevertheless, using deep signature strategies in $\Alog$, we are able to match and, in certain situations, surpass these benchmarks when comparing the same signature truncation levels.

\appendix
\section{Approximation of progressively measurable controls}\label{apx:measurability}

 In this section we relate canonical filtration on $\rps$ to the Borel $\sigma$-algebras of the restricted path space $\rpst$. 
To this end we define the coordinate process $\Z$ on $\rps$ by
$$\Z: [0,T] \times \rps \to G^{\lfloor p \rfloor}(V): \quad (t, \bx)\mapsto \Z_t(\bx):= \bx(t).$$
\begin{lemma}\label{lem:borel_algebra} For all $t\in[0,T]$ it holds
$\sigma(\Z_s \;\vert\; 0 \le s \le t)  = \rho_t^{-1}(\mathcal{B}(\rpst))$, where 
$$\rho_t : \rps \to \rpst  \quad \bx \mapsto \bx\vert_{[0,t]}.$$
\end{lemma}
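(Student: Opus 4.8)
The goal is to show the equality of two $\sigma$-algebras on $\Omega_T^p = \rps$, and as usual for such statements I would prove two inclusions. The key structural fact is that the restriction map $\rho_t \colon \rps \to \rpst$, $\bx \mapsto \bx|_{[0,t]}$, is continuous: by the definition of the $p$-variation metric via partitions of $[0,t]$ versus $[0,T]$, one has $d_{p\text{-}\mathrm{var};[0,t]}(\bx|_{[0,t]}, \by|_{[0,t]}) \le d_{p\text{-}\mathrm{var};[0,T]}(\bx,\by)$, together with the trivial bound on starting values, so $\rho_t$ is even $1$-Lipschitz. Hence $\rho_t$ is Borel measurable, which already gives $\rho_t^{-1}(\mathcal{B}(\rpst)) \subset \mathcal{B}(\rps)$; the work is to pin it down to the sub-$\sigma$-algebra generated by the coordinate process up to time $t$.

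For the inclusion $\sigma(\Z_s : 0 \le s \le t) \subset \rho_t^{-1}(\mathcal{B}(\rpst))$, I would note that for each fixed $s \le t$ the coordinate map $\Z_s \colon \rps \to G^{\lfloor p\rfloor}(V)$ factors as $\bx \mapsto \bx|_{[0,t]} \mapsto (\bx|_{[0,t]})(s)$, i.e.\ $\Z_s = \mathrm{ev}_s \circ \rho_t$ where $\mathrm{ev}_s \colon \rpst \to G^{\lfloor p\rfloor}(V)$ is evaluation at time $s$. Since $\mathrm{ev}_s$ is continuous on $\rpst$ (the topology on $\rpst$ controls $p$-variation, hence sup-norm, hence pointwise evaluation), it is Borel, so $\Z_s^{-1}(\mathcal{B}(G^{\lfloor p\rfloor}(V))) \subset \rho_t^{-1}(\mathcal{B}(\rpst))$ for every $s \le t$; taking the generated $\sigma$-algebra gives the claimed inclusion.

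For the reverse inclusion $\rho_t^{-1}(\mathcal{B}(\rpst)) \subset \sigma(\Z_s : 0 \le s \le t)$, the standard move is to show that $\mathcal{B}(\rpst)$ is generated by the evaluation maps $\mathrm{ev}_s$, $s \in [0,t]$. Because $\rpst$ is a separable metric (indeed Polish) space, its Borel $\sigma$-algebra coincides with the $\sigma$-algebra generated by any countable family of continuous functions that separates points; the evaluations $\{\mathrm{ev}_s : s \in [0,t]\cap\Q\} \cup \{\mathrm{ev}_t\}$ separate points of $\rpst$ (two rough paths on $[0,t]$ agreeing on a dense set of times agree by continuity) and the metric on $\rpst$ is itself expressible through these evaluations (the $p$-variation is a supremum over partitions, which can be reduced to rational partitions by continuity, of expressions in the $\mathrm{ev}_s$), so $\mathcal{B}(\rpst) = \sigma(\mathrm{ev}_s : s \in [0,t])$. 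Pulling back through $\rho_t$ and using $\mathrm{ev}_s \circ \rho_t = \Z_s$ then yields $\rho_t^{-1}(\mathcal{B}(\rpst)) = \sigma(\Z_s \circ \text{(nothing)} : s \le t) \subset \sigma(\Z_s : 0 \le s \le t)$.

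The main obstacle is the claim $\mathcal{B}(\rpst) = \sigma(\mathrm{ev}_s : s \in [0,t])$, i.e.\ that the Borel structure of the rough path space is generated by time-evaluations. One has to be a little careful that the $p$-variation metric — being a supremum over \emph{all} partitions — is a priori only lower semicontinuous as a function built from the evaluations, but since $\rpst$ consists of \emph{continuous} $G^{\lfloor p\rfloor}(V)$-valued paths, restricting to rational partitions suffices and makes the metric $\sigma(\mathrm{ev}_s : s \in [0,t]\cap\Q, \mathrm{ev}_t)$-measurable; then separability of $\rpst$ closes the argument via the standard fact that on a separable metric space the Borel $\sigma$-algebra equals the $\sigma$-algebra generated by the open balls, hence by the metric, hence by the evaluations. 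I would carry out the two inclusions in the order above, treating continuity/measurability of $\rho_t$ and of the evaluations as the routine input and reserving the bulk of the writing for the generation statement.
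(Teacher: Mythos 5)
Your setup --- two inclusions, with the easy one obtained by factoring $\Z_s = \mathrm{ev}_s \circ \rho_t$ and using continuity of the evaluations --- matches the paper. For the hard inclusion $\mathcal{B}(\rpst)\subseteq\sigma(\Z_s:s\le t)$, however, you take a genuinely different route. The paper approximates each $\bx\in\rpst$ by its geodesic interpolation $\bx^{(n)}$ along the uniform grid $\{tk/n\}$, invokes Wiener's characterization of geometric rough paths (\cite[Thm.~8.23]{FV10}) to get $d_{p\text{-}\mathrm{var};[0,t]}(\bx^{(n)},\bx)\to 0$, and then writes a closed ball $B_r(\bx_0)$ as a countable $\liminf$ of cylinder sets, using continuity of the geodesic-interpolation map $\phi_n$. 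You instead argue directly that the map $\bx\mapsto d_{p\text{-}\mathrm{var};[0,t]}(\bx_0,\bx)$ is measurable with respect to $\sigma(\mathrm{ev}_s : s\in[0,t]\cap\Q)\vee\sigma(\mathrm{ev}_t)$, because for continuous paths the supremum over all partitions in the definition of the metric equals the supremum over partitions with rational interior points (adjoining the possibly irrational right endpoint $t$), and each such fixed-partition expression is a continuous function of finitely many evaluations. Both proofs then use separability of $\rpst$ to pass from balls to the full Borel $\sigma$-algebra. Your argument is more elementary in that it avoids Wiener's theorem and the geodesic machinery of \cite[Thm.~7.32, Prop.~7.42]{FV10}; the paper's version has the small advantage of exhibiting an explicit countable cylinder-set representation of each ball. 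Two points worth making explicit in a write-up: the summands in $d_{p\text{-}\mathrm{var}}$ involve $\bx(t_i)^{-1}\otimes\bx(t_{i+1})$, so the continuity-in-partition-points you invoke relies on continuity of multiplication and inversion in the Lie group $G^{\lfloor p\rfloor}(V)$; and of the two closings you sketch, the metric-measurability-plus-separability one is the right one to carry out --- the alternative ``countable separating family of Borel functions generates the Borel $\sigma$-algebra on a Polish space'' is a Blackwell--Mackey-type theorem that needs Lusin--Souslin and would be heavier machinery than either your or the paper's direct argument requires.
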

\begin{proof}
Let $t\in[0,T]$. Clearly $\sigma(\Z_{s} \;\vert\; 0 \le s \le t) = \sigma(\mathcal{C}_t^{T})$, where 
\begin{align*}
\mathcal{C}^{T}_t  :=&~ \Big\{ \big\{\bx\in \rps \;\vert\; (\bx(t_1), \dots, \bx(t_n))\in B \big\}\;\Big\vert\; t_1, \dots, t_n \in [0,t], \quad B \in \mathcal{B}((G^{\lfloor p \rfloor}(V))^n) \Big\}
\end{align*}
are the cylinder subsets of $\rps$ restricted to the time interval $[0,t]$.
One observes that $\mathcal{C}^{T}_t = \rho^{-1}_t(\mathcal{C}_t)$ with $\mathcal{C}_t := \mathcal{C}^{t}_t\subseteq 2^{\rpst}$. 
By the continuity of the map $\rpst \to G^{\integer{p}}(V): \; \bx \mapsto \bx(s)$ for all $s \in [0,t]$ it follows that $\mathcal{C}_t \subset \mathcal{B}(\rpst)$ and thus $$\sigma(\X_{s} \;\vert\; 0 \le s \le t) =  \sigma(\mathcal{C}_t^{T}) =  \rho^{-1}_t(\sigma(\mathcal{C}_t)) \subseteq \rho^{-1}_t(\mathcal{B}(\rpst)).$$

Conversely,  define the system of dissections $\{t^{n}_k \;\vert\; k=0, \dots, n\} := \{t k/n \;\vert\; k=0, \dots, n\}$ of the interval $[0,t]$ and consider the geodesic interpolation map
$$\phi_n: (G^{\integer{p}}(V))^{n} \to \rpst, \quad (\eta_1, \dots, \eta_n) \mapsto  \sum_{k=1}^{n} 1_{[t^n_{k-1},t^n_k)} \Upsilon^{\eta_{k-1}, \eta_{k}}\left(\frac{(\cdot) - t^n_{k-1}}{t^n_k - t^n_{k-1}}\right),$$
where $\Upsilon^{a,b}:[0,1]\to G^{\integer{p}}(V)$ denotes the standard geodesic in $G^{\integer{p}}(V)$ connecting the points $a, b \in G^{\integer{p}}(V)$ (see \cite[Theorem~7.32, Proposition~7.42]{FV10}).
One readily observes that $\phi_n$ is continuous.
Indeed, this continuity is obvious when $G^{\integer{p}}(V)$ is equipped with the geodesic distance, i.e. the Carnot-Caratheodory metric. 
In Section~\ref{sec:notat-basic-defin} we have equipped $G^{\integer{p}}(V)$ with the (inhomogeneous) subspace topology of $T^{\integer{p}}(V)$. This is consistent because the induced topology on $G^{\integer{p}}(V)$ and $\rpst$ coincides with the one induced by the Carnot-Caratheodory metric (see \cite[Section~8.1.3]{FV10}).

For ease of notion we define for any $\bx\in\rpst$ the geodesic interpolation on $\{t^{n}_k\}$ by $\bx^{(n)} := \phi_n(\bx(t_1^{n}), \dots, \bx(t_n^{n}))$.
From Wiener's characterization of geometric rough paths (see \cite[Theorem 8.23]{FV10}) it thus follows that
\begin{align*}
\lim_{n\to\infty} d_{p-\mathrm{var};[0,t]}(\bx^{(n)}, \bx) = 0.
\end{align*}
Hence for any $\bx_0 \in \rpst$ and $r>0$  we have
\begin{align*}
B_r(\bx_0) :=& \Big\{ \bx \in \rpst \;\Big\vert\; d_{p-\mathrm{var};[0,t]}(\bx_0, \bx) \le r \Big\} \\
=& \Big\{ \bx \in \rps \;\Big\vert\; \lim_{n\to\infty} d_{p-\mathrm{var};[0,t]}(\bx_0, \bx^{(n)}) \le r \Big\} \\
=& \bigcap_{k=1}^{\infty}\bigcup_{m=1}^{\infty}\bigcap_{n = m}^{\infty}\Big\{ \bx \in \rpst \;\Big\vert\;  d_{p-\mathrm{var};[0,t]}(\bx_0, \bx^{(n)}) \le r + \frac{1}{k}\Big\} \\
=& \bigcap_{k=1}^{\infty}\bigcup_{m=1}^{\infty}\bigcap_{n = m}^{\infty}\Big\{ \bx \in \rpst \;\Big\vert\;  (\bx(t^{n}_1), \dots, \bx(t^{n}_n)) \in \phi_n^{-1}(B_{r+ \frac{1}{k}}(\bx_0))\Big\}
\end{align*}
Since by continuity of $\phi_n$ it holds $\phi_n^{-1}(B_r(\bx_0)) \in \mathcal{B}((G^{\integer{p}}(V))^n)$, we see that the above right-hand side is the limes inferior of cylinder sets and thus in $\sigma(\mathcal{C}_t)$.
This proves that $\mathcal{B}(\rpst) \subseteq\sigma(\mathcal{C}_t)$ and thus finally $\mathcal{B}(\rpst) = \sigma(\mathcal{C}^{T}_t) = \sigma(\Z_{s} \;\vert\; 0 \le s \le t)$.
\end{proof}

\bibliographystyle{siamplain}

\end{document}